\newcommand{\N}{{\cal N}}
\newcommand{\D}{{\cal D}}
\newcommand{\R}{I\hspace{-1ex}R}
\newcommand{\bs}[1]{\boldsymbol{#1}}
\newcommand{\bmu}{\bs{\mu}}
\newcommand{\bc}{\bs{c}}
\newcommand{\bx}{\bs{x}}
\newcommand{\calP}{\mathcal{P}}
\newcommand{\calD}{\mathcal{D}}
\newcommand{\calN}{\mathcal{N}}
\DeclareMathOperator*{\argmax}{argmax}
\DeclareMathOperator*{\argmin}{argmin}
\newtheorem{theorem}{Theorem}[section]
\newtheorem{lemma}{Lemma}[section]
\theoremstyle{remark}
\newtheorem{remark}{Remark}[section]
\begin{document}
\graphicspath{{Figs/}}

\title{An EIM-degradation free reduced basis method via over collocation and residual hyper reduction-based error estimation}

\author{
Yanlai Chen\footnote{Department of Mathematics, University of Massachusetts Dartmouth, 285 Old Westport Road, North Dartmouth, MA 02747, USA. Email: {\tt{yanlai.chen@umassd.edu}}.}, \, 
Sigal Gottlieb \footnote{Department of Mathematics, University of Massachusetts Dartmouth, 285 Old Westport Road, North Dartmouth, MA 02747, USA. Email: {\tt{sgottlieb@umassd.edu}}.}, \, 
Lijie Ji \footnote{School of Mathematical Sciences, Shanghai Jiao Tong University, Shanghai 200240, China. Email: {\tt sjtujidreamer@sjtu.edu.cn}. },\, 
Yvon Maday \footnote{
Sorbonne Universit\'e, Universit\'e Paris-Diderot SPC, CNRS, Laboratoire Jacques-Louis Lions, LJLL, F-75005 Paris and Institut Universitaire de France. Email: {\tt maday@ann.jussieu.fr}.} 
\footnote{%\newline ${   } \quad \ {  }$ 
L. Ji was partly supported by China Scholarship Council (CSC, No.201906230067) during the author's one year visit at University of Massachusetts, Dartmouth. Y. Chen and S. Gottlieb were partially supported by National Science Foundation grant DMS-1719698 and by AFOSR grant FA9550-18-1-0383. This material is based upon work supported by the National Science Foundation under Grant No. DMS-1439786 and by the Simons Foundation Grant No. 50736 while Y.~Chen and L.~Ji were in residence at the Institute for Computational and Experimental Research in Mathematics in Providence, RI, during the ``Model and dimension reduction in uncertain and dynamic systems" program.
}}

\date{\empty}

\maketitle

\begin{abstract}

The need for multiple interactive, real-time simulations using different parameter values has driven the design of fast numerical algorithms with certifiable accuracies. The reduced basis method (RBM) presents itself as such an option. RBM features a mathematically rigorous error estimator which drives the construction of a low-dimensional subspace. A surrogate solution is then sought in this low-dimensional space approximating the parameter-induced high fidelity solution manifold. However when the system is nonlinear or its parameter dependence nonaffine, this efficiency gain degrades tremendously, an inherent drawback of the application of the empirical interpolation method (EIM).

In this paper, we augment and extend the EIM approach as a direct solver, as opposed to an assistant, for solving nonlinear partial differential equations on the reduced level. The resulting method, called Reduced Over-Collocation method (ROC), is stable and capable of avoiding the efficiency degradation. Two critical ingredients of the scheme are collocation at about twice as many locations as the number of basis elements for the reduced approximation space, and an efficient  error estimator for the strategic building of the reduced solution space. The latter, the main contribution of this paper, results from an adaptive hyper reduction of the residuals for the reduced solution. Together, these two ingredients render the proposed R2-ROC scheme both offline- and online-efficient. A distinctive feature is that the efficiency degradation appearing in traditional RBM approaches that utilize EIM for nonlinear and nonaffine problems is circumvented, both in the offline and online stages. Numerical tests on different families of time-dependent and steady-state nonlinear problems demonstrate the high efficiency and accuracy of our R2-ROC and its superior stability performance.

\end{abstract}

%\newpage

\section{Introduction}

The need for highly efficient simulations of parametrized systems, often governed by partial differential equations (PDEs), is increasing in many areas of scientific and engineering applications.
In particular, the need for multiple interactive, real-time simulations using different parameter values has driven the design of 
 fast numerical algorithms with certifiable accuracies. The parameters involved may have a wide variety of physical meanings, including 
 boundary conditions, material properties, geometric settings, source properties etc. Moreover, the parameter dimensionality of the system may be high, the dependence of the system on the parameters may be complicated, the underlying systems may be nonlinear and their dependence on the parameters may be nonaffine.

To satisfy the need for  fast numerical algorithms with certifiable accuracies that can be used to efficiently compute multi-parametric systems,
the reduced basis method (RBM)  \cite{Quarteroni2015, HesthavenRozzaStammBook} was developed and proven effective. 
The RBM was introduced in the 1970s in the context of a nonlinear structure problem \cite{Almroth1978, noor1979reduced}. It has since been used in a wide variety of problems, including linear evolution equations \cite{HaasdonkOhlberger}, viscous Burgers equation \cite{veroy2003reduced}, the Navier-Stokes equations \cite{deparis2009reduced}, 
and harmonic Maxwell's equation \cite{chen2010certified, chen2012certified}, among many others. 
The success of RB methods depends on an offline-online decomposition process, where the costly process of basis selection and surrogate space construction 
are performed offline by a greedy algorithm, and an efficient online reconstruction using the reduced basis then provides orders-of-magnitude efficiency gain. 
The RBM is constructed so that the computational complexity of the online reduced solver is independent of the number of degrees of freedom of the 
high-fidelity approximation of the basis functions, and so can provide efficient real-time solutions.
Detailed reviews of the RBM approach  can be found in \cite{Rozza2008, Haasdonk2017Review} and \cite{Quarteroni2015, HesthavenRozzaStammBook}.

For mildly nonaffine terms and/or nonlinear equations, the  {\em Empirical Interpolation Method} (EIM) 
or its discrete version (DEIM) \cite{Barrault2004, grepl2007efficient, ChaturantabutSorensen2010, PeherstorferButnaruWillcoxBungartz2014} 
is typically used to  remove the online dependence on the cost of the high-fidelity approximation and  achieve the efficiency goals of RBM.
However, when the problem has a strong nonlinearity or nonaffinity, the EIM is often not feasible. 
Furthermore, even in cases where performing a (D)EIM is  feasible, it may not be efficient.
For example, in cases when the parameter dependence or the nonlinearity is complicated, 
the EIM decomposition may require many terms,  increasing the  
the online complexity  and potentially  severely degrading the reduced solver's online efficiency. To see this, consider  a simple  heat conduction problem with a nonaffine parameter dependence:
\[ -\nabla \cdot \left( a(x; \bmu) \nabla u\right) = f.\]
To handle the nonaffine parameter dependence, we first apply EIM to approximate the function $a(x; \bmu)$
using  a linear combination of $\bmu$-independent functions, 
\[ a(x; \bmu) \approx \sum_{q=1}^{Q_a}\theta_q(\bmu) a(x; \bmu^q).\]
Here  $\{\bmu^q\}_{q=1}^{Q_a}$ is an ensemble, typically chosen through a greedy procedure. 
The equation of interest is written in its weak form $a(u, v; \bmu) \coloneqq \left(a(x; \bmu) \nabla u, \nabla v\right) = (f, v)$, and 
the reduced-order solution space spanned by  the full order solutions  $\{\xi^1, \dots, \xi^N\}$ is identified during the offline learning stage.
Finally, the online reduced solver is assembled for each $\bmu$ with the corresponding stiffness matrix created via
\[
\left( a(\xi_i, \xi_j; \bmu) \right)_{i,j = 1}^N \coloneqq \left(a(x; \bmu) \nabla \xi_i, \nabla \xi_j\right)_{i,j = 1}^N = \sum_{q=1}^{Q_a} \theta_q(\bmu) \left(a(x; \bmu^q) \nabla \xi_i, \nabla \xi_j\right)_{i,j = 1}^N,
\]
where $\left(a(x; \bmu^q) \nabla \xi_i, \nabla \xi_j\right)_{i,j = 1}^N$ is computed offline. Notice that although the online solver is not dependent on the cost of the high fidelity approximations, its 
complexity is  linearly dependent on the number of EIM terms $Q_a$.
If $Q_a$ is large, this may lead to  substantial reductions in efficiency. 
When the model involves geometric parametrization (such as in  \cite{chen2012certified, BenaceurEhrlacherErnMeunier2018}), it has been observed that 
 $Q_a$ can be prohibitively large (i.e. much larger than the reduced space dimension $N$) 
 even if the more efficient matrix version of EIM \cite{negri2015efficient} is adopted. 
In this work, we present an approach  to mitigate this drawback of EIM. The proposed  
reduced residual (R2) based reduced over-collocation (ROC) method circumvents the 
efficiency degradation  (in both the offline and online stages) 
that plagues RBM approaches that utilize EIM for nonlinear and nonaffine problems.

\subsection{Overview of the  reduced-residual reduced over-collocation approach}
To overcome the limitations of the EIM framework, we adopt a collocation approach as we did in 
\cite{ChenGottlieb2013, ChenGottliebMaday}
rather than variational approaches such as Galerkin or Petrov-Galerkin \cite{BennerGugercinWillcox2015, CarlbergBouMoslehFarhat2011, CarlbergBaroneAntil2017}.
The reduced collocation method was developed in  \cite{ChenGottlieb2013}, which works well to circumvent the EIM degradation for the reduced solver but suffers from stability problems \cite{ChenGottliebMaday}. To mitigate the stability issue, we adopt an {\em over-collocation} approach 
where we collocate at approximately twice as many points as the dimension of the reduced order space. 
Half of these collocation points  interpolate the reduced solution, which is given by a linear combination of the basis elements. 
We choose the other collocation points based on a computational analysis of the reduced order residuals when these 
basis functions are identified during the offline procedure. These additional collocation points  ensure a good interpolation 
of the residual corresponding to an arbitrary parameter value when the reduced order space is used to solve the pPDE. 
However, over-collocation alone does not provide  online and offline efficiency, because the error estimators (which are 
critical for the construction of the reduced solution space), still require the application of EIM decomposition.

The challenge of computing error estimators  without requiring a costly EIM decomposition
is resolved by the second ingredient of our method. 
We propose an efficient alternative for guiding the strategic selection of parameter values to build the reduced solution space,
 an error estimator that is based on a reduced residual.
The key is a systematic and hierarchical reduction of the judiciously selected residuals.
In comparison, our previously proposed L1-based over collocation approach \cite{ChenJiNarayanXu2020} follows the guidance of the L1-norm of the coefficients, under a set of a Lagrangian basis, of the reduced basis solution. The proposed R2-based scheme sits on a more mathematically rigorous foundation.

Together, these two ingredients produce a  reduced residual reduced over-collocation method, which we will
refer to as the {\bf R2-ROC} method. This R2-ROC scheme is online efficient in the sense that the
online cost is independent of the number of degrees of freedom of the high-fidelity truth approximation,
and also avoids the efficiency degradation of a direct EIM approach for nonlinear and nonaffine problems. 
The  R2-ROC method is also highly efficient offline:  it requires minimal computation beyond the standard RBM 
cost of acquiring solution snapshots used to construct the reduced order space. 
Consequently, minimum number of simulations of the pPDE that make the offline preparation stage worthwhile 
(the ``break-even'' number of simulations)  is significantly smaller than traditional RBM, as we show in our numerical
examples for  the steady-state and time-dependent cases of the diffusion with cubic reaction and the viscous Burgers' equation.

The paper is organized as follows. In Section \ref{sec:R2-ROC-Alg}, we introduce and analyze our R2-ROC method. We also discuss the difference between our approach and several others. 
In Section \ref{num:final} we present numerical results for two test problems, the viscous Burgers' equation \cite{veroy2003reduced} 
and various nonlinear convection diffusion reaction equations.
For all our test problems, the R2-ROC is shown to have accuracy on par with the classical RBM, while demonstrating 
significantly improved efficiency due to the independence of the number of expansion terms resulting from the EIM decomposition. 
Finally, concluding remarks are drawn in Section \ref{sec:conclusion}.

\section{The Reduced over-collocation (ROC) method}
\label{sec:R2-ROC-Alg}
Let $\Omega \subset \mathbb{R}^{d}$ (for $d = 1, 2,$ or $3$) be a bounded physical domain on which
we define the problem
\begin{equation}
\calP(u(\bx; \bmu);\bmu)-f(\bx)=0,~\bx \in \Omega,
\label{eq:steadymodel}
\end{equation}
with appropriate boundary conditions. 
The term  $\calP$ is a parametric second order partial differential operator that may include linear 
and nonlinear functions of the solution $u(\bx; \bmu)$,  and its derivatives $\nabla u(\bx; \bmu)$, and $\Delta u(\bx; \bmu)$.
The  $p$-dimensional parameter  $\bmu$ lives in the space $\mathcal{D} \subset \mathbb{R}^{p}$.
The  solution $u(\bm{\mu}) \coloneqq u(\bx;\bmu)$ lives in a Hilbert space $H$;
for example, for a stationary Laplace problem, the space $H$ is typically the Sobolev space $H^1(\Omega)$. 
The R2-ROC method is designed to work for both steady state and time dependent problems, so we also consider 
 the transient problem
\begin{equation}
u_t +\calP(u(t, \bx; \bmu);\bmu)-f(\bx)=0,~\bx \in \Omega,
\label{eq:timemodel}
\end{equation}
with appropriate boundary (and initial) conditions. We first   focus on developing the algorithm for steady state problems \eqref{eq:steadymodel} 
and will then extend the algorithm to time dependent case \eqref{eq:timemodel} in Section \ref{sec:offline:time}.

\begin{table}[htbp]
  \begin{center}
  \resizebox{\textwidth}{!}{
    \renewcommand{\tabcolsep}{0.4cm}
    {\scriptsize
    \renewcommand{\arraystretch}{1.3}
    \renewcommand{\tabcolsep}{12pt}
    \begin{tabular}{@{}lp{0.8\textwidth}@{}}
      \toprule
      $\bmu = (\mu_1, \dots, \mu_p)$ & Parameter in $p$-dimensional parameter domain $\calD \subseteq \R^p$ \\
      $\Xi_{\rm{train}}$ & Parameter training set, a finite subset of $\mathcal{D}$ \\
      $u(\bmu)$ & Function-valued solution of a parameterized PDE on $\Omega \subset \mathbb{R}^{d}$\\
      $\calP(u(\bmu); \bmu)$ & A (nonlinear) PDE operator\\
 {$K$} & Number of finite difference intervals per direction of the physical domain\\
      $\mathcal{N} \approx K^d$ & Degrees of freedom (DoF) of a high-fidelity PDE discretization, the ``truth" solver \\ 
      $X^\calN$ & A size-$\calN$ (full) collocation grid\\
      $u^{\mathcal{N}}(\bmu)$ & Finite-dimensional truth solution\\
      $N$ & Number of reduced basis snapshots, $N \ll \mathcal{N}$\\
        $\bmu^j$ & ``Snapshot" parameter values, $j=1, \ldots, N$\\
      $\widehat{u}_n(\bmu)$ & Reduced basis solution in the $n$-dimensional RB space spanned by $\{u^{\mathcal{N}}(\bmu^1), \dots, u^{\mathcal{N}}(\bmu^n)\}$\\
      $e_n(\bmu)$ & Reduced basis solution error, equals $u^{\mathcal{N}}(\bmu) - \widehat{u}_n(\bmu)$ \\
      $\Delta_{{N}} \left(\bmu\right)$ & A residual-based error estimate (upper bound) for $\left\|e_N\left(\bmu\right)\right\|$ or an error/importance indicator\\
      $X^{N-1}_r=\{\bx^1_{**}, \dots, \bx^{N-1}_{**}\}$ & A size-$(N-1)$ reduced collocation grid, a subset of $X^\calN$ determined based on residuals\\
      $X^N_s = \{\bx^1_*, \dots, \bx^N_*\}$ & An additional size-$N$ reduced collocation grid, a subset of $X^\calN$ determined based on the solutions\\
      $X^M$ & A reduced collocation grid of size $M$ that is $X^{N-1}_r \cup X^N_s$\\
      $T$ & Final time for the time-dependent problems\\
      $\Delta t$ & Time stepsize for the time dependent problems\\
      $\calN_t$ & Total number of time levels, i.e. $\calN_t = \frac{T}{\Delta t}$\\
      $t^j$ & Time level $j$, $j=1, \ldots, \calN_t$\\
      $\epsilon_{\mathrm{tol}}$ & Error estimate stopping tolerance in greedy sweep \\
      \midrule
      Offline component & The pre-computation phase, where the reduced solver is trained using a greedy selection of snapshots from the solution space\\
      Online component & The process of solving the offline-trained reduced problem, yielding the reduced order solution.\\
    \bottomrule
    \end{tabular}
  }
    }
  \end{center}
\caption{Notation and terminology used throughout this article.}\label{tab:notation}
\end{table}
We  proceed by  discretizing  the equation \eqref{eq:steadymodel} by a high-fidelity scheme (known as a ``truth solver'' in the RB literature).  
We define  the discrete solution $u^\N(X^\N; \bmu)$ such that the equation
\begin{equation}
\calP_{\N}(u^\N(X^\N; \bmu);\bmu)-f(X^\N)=0,
\label{eq:pdesystem}
\end{equation}
is satisfied on a set of  $\N$ collocation points $X^\N \in \Omega$. 
With a slight abuse of notation, we let $\N$ denote the number of the degrees of freedom in the solver,
even though the $\N$ points in $X^\N$ might include, e.g. points on a Dirichlet boundary that are not free.

The truth approximation  $u^\N(X^\N; \bmu)$
is thus a discretization of the solution $u(\bmu)$ on the grid $X^\N$ so that the equation \eqref{eq:steadymodel} 
is enforced on a very refined discrete level. 
In \eqref{eq:pdesystem}, the terms  $\nabla u(X^\N; \bmu)$, and $\Delta u(X^\N; \bmu)$  are
approximated by the numerical discretizations $\nabla_h u(X^\N; \bmu)$, and $\Delta_h u(X^\N; \bmu)$,
where generally $h \propto \frac{1}{\sqrt[d]{\N}}$.
In this paper, we use a finite difference method (FDM)  to obtain this 
discretized equation.  However, the extension to point-wise schemes such as spectral collocation is obvious, 
and to finite element methods is possible.

We are now ready to describe the R2-ROC algorithm. We split the description into the online (in Section \ref{sec:online})
and offline (in Section  \ref{sec:offline_c}) components. However, specification of part of the online algorithm is postponed 
until the introduction of the ROC offline algorithm in Section \ref{sec:offline_c}, 
which repeatedly calls the online solver to construct a surrogate solution space. Analysis of the method is provided in Section \ref{sec:analysis}.
In Section \ref{sec:offline:time} we  present the extension to the time-dependent problems of the form  \eqref{eq:timemodel}.
The algorithms contain a great many terms with associated superscripts and subscripts. To help avoid confusion, 
we list all these terms and their meaning in Table \ref{tab:notation} for the readers' reference.

\subsection{Online algorithm}

\label{sec:online}

The online component of the R2-ROC is similar to the online component of the reduced collocation method described in our prior work \cite{ChenGottlieb2013}, 
with one critical difference: in the R2-ROC method we  use a larger number of collocation points than the number
of reduced basis snapshots. Moreover, the selection of these points now takes a closer account of the PDE we are solving. This {\em over-collocation} feature is an innovative approach that provides additional stabilization 
of the online solver, as we will observe in the numerical results.

We begin with $N$ selected parameters $\{\bmu^1, \dots, \bmu^N\}$, and the corresponding high fidelity truth approximations 
$\{ u_n \coloneqq u^\N(X^\N;\bmu^n), 1 \le n \le N\}$. We also have a set of  collocation points $ X^M $ $(M \, \ge N)$ 
formed from  a subset of $X^\N$,
\begin{align*}
  X^M = \{\bx_*^1, \dots, \bx_*^M\}, \quad \mbox{ with }\bx_*^j  \mbox{ having index } i_j \mbox{ in }  X^\N. 
\end{align*}
These three ingredients: the chosen parameters, the truth approximations, and the set of collocation points are all identified and computed in the 
{\em offline} phase and will be described in Section \ref{sec:offline_c}.

Note that we adopt the same notation for a function $u_n$ and  its discrete representation as a vector of its values at the grid points. 
These vectors $\{ u_n, 1 \le n \le N\}$ constitute the columns of basis matrix  $W_{n} \in \mathbb{R}^{\N \times n}$ for $n \in \{1, \dots, N\}$. 
Furthermore, we let $W_{n,M}$ denote the matrix of the corresponding reduced basis space on the set $X^M$,
\begin{align*}
W_{n,M} &= [u_{1}(X^M),  \ldots, u_{n}(X^M)] \in \mathbb{R}^{M \times n}, \quad \mbox{for } n = 1, \dots, N.\\
& =P_* W_{n},
\end{align*}
where the operator $P_* \in {\mathbb R}^{M \times \calN}$ is defined as,
\begin{align*}
P_* =\left[e_{i_1}, \cdots, e_{i_M}\right]^T,
\end{align*}
with $e_{i} \in \mathbb{R}^{\N \times 1}$ the $i$ th canonical unit vector.

We are now ready to describe the online algorithm. For any given parameter $\bmu$ we seek a reduced approximation of the solution 
$u(\bmu)$, denoted by $\widehat{u}_n(\bmu)$ and computed as a linear combination of the truth approximation ``snapshots'' contained in the 
reduced basis space $W_{n} $. This reduced basis solution satisfies
\[\widehat{u}_n(\bmu) = W_{n} \bc_n (\bmu),\] where 
the  coefficients $\bc_n (\bmu)$ must satisfy a reduced version of equation \eqref{eq:pdesystem}:
\begin{equation}
\calP_{\N}(W_{n} \bc_n (\bmu); \bmu) \approx f(X^\N).
\label{eq:reducedidea}
\end{equation}
Recall that the number of ``snapshots'' $n$ is (hopefully significantly) smaller than the degrees of freedom of the 
truth approximation $\calN$, so we have an over-determined system. In  \cite{ChenGottlieb2013} we dealt with this system by a 
Petrov Galerkin approach or collocation on $n$ points (which produced a square system). In this paper we propose something different, 
which is one of the distinctive features of our method. Indeed, the R2-ROC method proposes to solve the unknown coefficients $\bc_n(\bmu)$ by  minimizing  the residual of \eqref{eq:reducedidea} on the set of nodes $X^M$:
\[  P_* \left(\calP_{\N}(W_{n} \omega; \bmu)-f(X^\N)\right).\]
The problem is formulated as an optimization problem: 
\begin{align}
\bc_n(\bmu) = \argmin_{\omega \in \mathbb{R}^n}\parallel P_* \left(\calP_{\N}(W_{n} \omega; \bmu)-f(X^\N)\right)\parallel_{\mathbb{R}^M}.
\label{pde:reduced}
\end{align}
Note that this is a nonlinear system of equations for $\bc_n(\bmu)$. It can be solved using an iterative methods such as Newton's method or Picard iteration.

The calculation of 
\[ P_* \left( \calP_{\N}(W_{n} \omega; \bmu)-f(X^\N) \right) \]
relies on  the computation of
\begin{align*}
  \nabla_h\widehat{u}_n(\bmu) &=  P_* \left[\left(\nabla_h u_{1}\right),  \ldots, \left( \nabla_h u_{n}\right)\right] \bc_n(\bmu),\\
  \Delta_h\widehat{u}_n(\bmu)& =P_* \left[\left(\Delta_h u_{1}\right),  \ldots, \left( \Delta_h u_{n}\right)\right] \bc_n(\bmu).
\end{align*}
Notice that the differentiations $\nabla_h u_{j}$ and $\Delta_h u_{j}$ are computed accurately,  at a cost proportional of $\N$
and then projected to the reduced grid $X^M$. However, this step is performed offline, and  only a  matrix of dimension $M\times n$
is used in the online computation of $  \nabla_h\widehat{u}_n(\bmu)$ and $  \Delta_h\widehat{u}_n(\bmu)$ (due to linearity of $\nabla_h \cdot$ and $\Delta_h \cdot$) so the online cost remains low. 
The collocation approach  allows for solving this system with a cost only dependent on $M$ and $n$ even when $\calP_{\N}$ is nonlinear and nonaffine due to its point-wise evaluation nature.  
As a consequence, the online solver is independent of the degrees of freedom $\N$ of the underlying truth solver.  

In summary, the online procedure of the nonlinear solve for obtaining $\bc_n(\bmu)$ from equation \eqref{pde:reduced} involves: 
\begin{itemize}
\item [1)] realizing/updating $W_{n,M}\bc_n$, $\nabla_h(W_{n,M})\bc_n$,  and $\Delta_h(W_{n,M})\bc_n$  at each iteration, at a cost of  $O(M n)$ operations; 
\item [2)] calculating the forcing term $f(X^M)$, at a cost of $O(M)$ operations; and 
\item [3)] solving the reduced linear systems at each iteration of the nonlinear solve, at a cost of  $O(n^3)$ operations per iteration.
\end{itemize}

The next section describes the offline procedure in which  we select the  $N$ reduced basis parameters $\{\bmu^1, \dots, \bmu^N\}$
sequentially through a greedy algorithm.  Once a selected parameter $\bmu^j$ is determined,  we precompute as many 
 quantities as possible so that minimal update is performed at each iteration of the online iterative method. We also describe the choice of 
 the over-collocation points $X^M$ and analyze the resulting scheme in the next section.

\subsection{Offline algorithm}

\label{sec:offline_c}

In this section, we describe the offline procedure of the algorithm. There are three components here, and we describe each separately.

\subsubsection{A greedy algorithm}
\label{sec:L1greedy}

Reduced basis methods typically utilize  a greedy scheme to iteratively construct  the reduced basis space. The R2-ROC is no exception.
In this section we describe the procedure for  selecting the representative parameters $\bmu^1, \ldots, \bmu^N$ which comprise the reduced parameter space,
and the corresponding reduced basis space $W_N$. We utilize a greedy scheme to iteratively construct $W_N$ as follows:

We begin by selecting the first parameter $\bmu^1$ randomly from $\Xi_{\rm train}$ (a discretization of the parameter domain $\calD$) and 
we obtain its corresponding high-fidelity truth approximation $u^\mathcal{N}(\bmu^1)$ to
form a (one-dimensional) RB space given by the range of $W_1 = \left[u^{\mathcal N}(\bmu^1)\right]$. 
Now assume that we begin each iteration with a $n$-dimensional reduced parameter space and 
reduced basis space $W_n$ comprised of the corresponding truth approximations.
Next, we use the online procedure described above to obtain an RB approximation $\widehat{u}_{n}(\boldsymbol{\mu})$ for each parameter in $\Xi_{\rm train}$ 
and compute its error estimator $\Delta_n(\bmu)$. The $(n+1)$th parameter $\bmu^{n+1}$ is now selected using a 
greedy approach and the RB space augmented by
\begin{equation}
\label{eq:rbmgreedy}
  \bmu^{n+1} = \underset{\bm{\mu} \in \Xi_{\rm train}}{\argmax} \Delta_{n}(\bm{\mu}), \quad \quad W_{n+1} = \left[ W_n \;\; u^\calN(\bmu^{n+1})\right].
  \end{equation}

For this procedure to be efficient and accurate, the  greedy algorithm requires an efficiently-computable error estimate that quantifies the discrepancy 
between the $n$-dimensional surrogate solution  $\widehat{u}_n(\bmu)$ and the truth solution $u^\calN(\bmu)$.  We denote this error estimator 
 $\Delta_n(\bmu)$, it traditionally satisfies $\Delta_n(\bmu) \geq \left\| \widehat{u}_n(\bmu) - u^\calN(\bmu)\right\|$. The  error bound $\Delta_n$ is usually defined based on a residual-type {\em a posteriori} error estimate from the truth discretization. 
Mathematical rigor and implementational efficiency of this error estimate are crucial for the accuracy of the reduced basis solution and 
its efficiency gain over the truth approximation.  When $\calP(u; \bmu)$ is a linear operator, the Riesz representation theorem and a 
variational inequality imply that $\Delta_n$ can be taken as 
$$\Delta_n^R (\bmu) = \frac{\lVert f -
\mathcal{P}_\mathcal{N}(\widehat{u}_n; \bmu) \rVert_2} {\sqrt{\beta_{LB}(\bmu)}}, 
$$
which is a rigorous bound (with the $^R$-superscript denoting that it is based on the full residual). 
Here $\beta_{LB}(\bmu)$ is a lower bound for the smallest eigenvalue of ${P}_\mathcal{N}(\bmu)^T {P}_\mathcal{N}(\bmu)$ 
where ${P}_\mathcal{N}(\bmu)$ is the matrix corresponding to the discretized linear operator $\mathcal{P}_\mathcal{N}(\cdot; \bmu)$.

\subsubsection{An error estimator based on {\em Reduced Residual}}

For the general nonlinear equation, deriving the counterpart of this estimation is far from trivial. 
Moreover, even for linear equations, the robust evaluation of the residual norm in the numerator is delicate \cite{Casenave2014_M2AN, JiangChenNarayan2019}. 
Furthermore, we would also have to resort to an offline-online decomposition to retain efficiency which usually means application of EIM for nonlinear or nonaffine terms. 
This complication degrades, sometimes significantly \cite{BenaceurEhrlacherErnMeunier2018, negri2015efficient}, the online efficiency due to the large number of resulting 
EIM terms.  What exacerbates the situation further is that the (parameter-dependent) stability factor $\beta_{LB}(\bmu)$ must be calculated by a computationally efficient 
procedure such as the successive constraint method \cite{HuynhSCM, HKCHP}.  In this section we present our novel reduced-residual error estimator as an alternative that does not suffer from any of these challenges.

This alternative error estimator must be as efficient and effective  for the nonlinear and 
nonaffine problems as for the linear affine ones. We  present here our novel {\em reduced residual} based error estimator:
\begin{equation}
\Delta_n^{RR}(\bmu) = \lVert f - \mathcal{P}_\mathcal{N}(\widehat{u}_n; \bmu) \rVert_{L^\infty(X^M)}= \lVert P_*\left( f - \mathcal{P}_\mathcal{N}(\widehat{u}_n; \bmu)\right) \rVert_{L^\infty}.
\label{eq:deltaRR}
\end{equation}
Note that this residual is not being evaluated over the entire discrete mesh of the truth approximation, 
only a judiciously reduced portion of it.  It is therefore based on the {\em reduced residual}, giving the name of the method - R2-based reduced over collocation. 

The effectiveness of this error estimator is wholly dependent on the choice of the over-collocation set $X^M$, the topic of the next sub-section. 
Our analysis in Section \ref{sec:analysis} will show that first set of points of  $X^M$, denoted by $X_s^N$, ensures that, when the differential operator is linear, our reduced collocation solution recovers a specifically designed generalized empirical interpolant \cite{MagicPt_2009, MadayMulaTurinici_GEIMSIAM} of the truth approximation. 
The remaining part of $X^M$, denoted by $X_r^{N-1}$, is critical in maintaining the online-efficiency of $\Delta_n^{RR}$ in \eqref{eq:deltaRR} while providing a stable interpolating procedure for $f - \mathcal{P}_\mathcal{N}(\widehat{u}_n; \bmu)$ of any $\bmu \in \calD$ in the space of those at the greedy-selected $\bmu^n$'s, thus a mechanism to control $\lVert f - \mathcal{P}_\mathcal{N}(\widehat{u}_n; \bmu)  \rVert_{L^\infty(X^\N)}$ which is stronger than $\Delta_n^{RR}$, but not online-efficient.

In the numerical examples we demonstrate that this reduced residual error estimator is a reliable quantity to monitor when deciding which  
representative parameters $\bmu^1, \dots, \bmu^N$ will form the surrogate space. 
In addition, a further advantage of this error estimator over our previously proposed L1-ROC \cite{ChenJiNarayanXu2020} 
is that $\Delta_n^{RR}$ does decrease as $n$ increases. In fact, the numerical results seem to 
indicate the effective index is rather constant and small. 
Moreover, the calculation of $\Delta_n^{RR}$ is independent of $\calN$ while the traditional $\Delta_n^R$ is dependent on $\calN$. 
This difference leads to the dramatic efficiency gain of the R2-ROC, as we will numerically confirm in Section \ref{num:final}.

\begin{algorithm}[h]
\begin{algorithmic}[1]
\vspace{0.5ex}
\State Choose  $\bmu^1$ randomly from $\Xi_{\rm train}$, compute $u_1\coloneqq u^\N(X^\N;\bmu^1)$. 
\State Compute $\bx_*^1 = \argmax_x |\calP_\calN(u_1; \bmu^1)(\bx)|, \, \sigma_1(\cdot) = \sigma_{\bx_*^1}^{\bmu^1}(\cdot)$, define $\xi_1 = u_1 / \sigma_1(u_1)$. Let $i_1$ be the index of $\bx_*^1$ and $P_* = [e_{i_1}]^T$.
\State Initialize $m = n = 1, \, X^m = X^n_s =[\bx_*^1]$, $W_1 = \left\{\xi_1 \right\}, W_{1,m} = P_*W_1$, and $X_r^0 = \emptyset$. 
\State \mbox{\textbf{For}} $n = 2,\ldots, N$ 
\State $\quad\ \mbox{Solve }  \bc_{n-1} (\bmu) $ with $W_{n-1}, P_*$ and calculate $\Delta_{n-1}(\bmu)$  for all $\bmu \in \Xi_{\rm train}$.  
\State $\quad\ \mbox{Find } \bmu^{n} = \argmax_{\bmu \in \Xi_{\rm train}\backslash\left\{\bmu^i,i=1,\cdots,n-1\right\}}  \Delta_{n-1}(\bmu)$ and solve for $\xi_n \coloneqq u^\N(X^\N;\bmu^n)$. 
\State $\quad\ \mbox{Compute a generalized interpolatory residual for } \xi_n: \, \mbox{find } \{\alpha_j\} \mbox{ and let } \xi_n = \xi_n - \sum_{j=1}^{n-1}\alpha_j \xi_j$ \mbox{so that } $\sigma_i(\xi_n) = 0, \forall i \in \{1, \cdots, n-1\}$. %$\xi_n(X^{n-1}_s)=0$.
\State $\quad\ \mbox{Find }$ $\bx_*^{n} = \argmax_x |\calP_\calN(\xi_{n}; \bmu^{n})(\bx)|, \, \sigma_{n}(\cdot) = \sigma_{\bx_*^{n}}^{\bmu^{n}}(\cdot)$, $\xi_n=\xi_n/ \sigma_n(\xi_n)$, and let $X^n_s = X^{n-1}_s \cup \{\bx_*^n\}$, and $i_1$ be the index of $\bx_*^n$.
\State $\quad\ \mbox{Form the full residual vector } r_{n-1} =\calP_{\N}(\widehat{u}_{n-1}(\bmu^n);\bmu^n)-f(X^\N)$ and compute its interpolatory residual: $\, \mbox{find } \{\alpha_j\} \mbox{ and let } r_{n-1} = r_{n-1} - \sum_{j=1}^{n-2}\alpha_j r_j$ \mbox{so that } $r_{n-1}(X_r^{n-2})=0$. Find $\bx^{n-1}_{**}=\argmax_{\bx \in X^\N/\left\{X^m,\bx_*^n\right\}} |r_{n-1}(\bx)|$. Let $r_{n-1}=r_{n-1}/ r_{n-1}(\bx^{n-1}_{**})$, and $X^{n-1}_r =X^{n-2}_r \cup \{\bx^{n-1}_{**}\}$ and $i_2$ is the index of $\bx_{**}^{n-1}$.
\State $\quad\ \mbox{Update } W_{ n} = \{W_{n-1}, \xi_n\}, m=2n- 1, X^m = X^n_s \cup X_r^{n-1}, P_*= P_*\cup [e_{i_1},e_{i_2}]^T$.
\State \mbox{\textbf{End For}}
\end{algorithmic}
\caption{R2-ROC: construction of $W_N$ and the  collocation set $X^{2N - 1} = X^N_s \cup X^{N-1}_r$.} 
\label{alg:c:plus:offline2}
\end{algorithm}

\subsubsection{Construction of the reduced over-collocation set $X^M$}

\label{sec:offline_oc}

There is one piece that we have left until all the other ingredients are in place:
we are now ready to describe how to  determine the reduced collocation set $X^M$ that are needed in 
both the online and offline algorithms. The reduced collocation set $X^M$ is comprised of collocation points
that are selected using two different approaches. We will describe these as two different sets.
The first set of point, denoted by $X^N_s$,  consists of the maximizers from a Generalized EIM (GEIM) procedure \cite{MagicPt_2009, MadayMulaTurinici_GEIMSIAM} that is tailored to our setting. 

Indeed, the differentiating feature is the GEIM interpolating functional which we define as follows for any admissible function $v(\bx)$, any $\bx \in \Omega$, and $\bmu \in \calD$
\begin{equation}
\sigma_{\bx}^{\bmu}(v) = \calP_\calN(v; \bmu)(\bx).
\label{eq:functional}
\end{equation}
When $u_1 = u^\N(\cdot; \bmu^1)$ is calculated, we identify the first collocation point and the corresponding functional as 
\begin{equation}
\bx_*^1 = \argmax_x |\calP_\calN(u_1; \bmu^1)(\bx)|, \quad \sigma_1(\cdot) = \sigma_{\bx_*^1}^{\bmu^1}(\cdot)
\label{eq:point_functional_1}
\end{equation}
 and our first collocation basis $q_1 = \frac{u_1}{\sigma_1(u_1)}$ and $B_{11} = \sigma_1(q_1)$.

We then proceeds as follows. For $n = 1, 2, \cdots\cdots$, when $\bmu^{n+1}$ is identified by the greedy algorithm and $u_{n+1}$ obtained, we solve $\left\{ \alpha_{n+1,i}\right\}_{i=1}^n$ such that 
\[
\sigma_i\left(q_{n+1} \triangleq u_{n+1} - \sum_{i=1}^n \alpha_{n+1,i} q_i\right) = 0, \quad \forall \,\, i \in \{1, \cdots, n\}.
\]
We then augment the collocation points and functionals
\begin{equation}
\bx_*^{n+1} = \argmax_x |\calP_\calN(q_{n+1}; \bmu^{n+1})(\bx)|, \quad \sigma_{n+1}(\cdot) = \sigma_{\bx_*^{n+1}}^{\bmu^{n+1}}(\cdot),
\label{eq:point_functional_2}
\end{equation}
 and define $q_{n+1} = \frac{q_{n+1}}{\sigma_{n+1}(q_{n+1})}$. 
Lastly, we expand the matrix $B$ by a column and a row via $B_{ij} = \sigma_i(q_j)$ when $i$ or $j$ equals $n+1$. We finally define the first collocation set $X^N_s = \{\bx_*^1, \cdots, \bx_*^N\}$. 

The second set of points is chosen due to a recognition of the 
importance of controlling the residuals of the PDE when solving the equations.
In order to control the PDE  residuals, we must represent them well on the reduced grid. 
For that purpose, we introduce a second set of points, called $X^{N-1}_r$, which are chosen
using a greedy algorithm aiming to control the residual. To examine the  residual of the RB solution at the chosen $\bmu^{n}$ when only $n-1$ basis elements are used, 
we compute the residual vectors
\begin{equation}
r_{n-1}^{\bmu^n} =\calP_{\N}(\widehat{u}_{n-1}(\bmu^n);\bmu^n)-f(X^\N), \quad n \in \{2, \dots, N\}.
\label{eq:offlineresidual}
\end{equation}
These residuals are a basis that can be used to  interpolate the residual at any other point $\bmu$ in the domain,
and so we need to identify the collocation (or interpolation) points on which  this residual basis $\{ r_n\}$ best represents all
possible residuals.
For this reason, we take these $N-1$ residual vectors and perform an EIM procedure on them. 
The $N-1$ maximizers from this procedure form the second set which is denoted $X^{N-1}_r$.

The choice of the set of over-collocation points $X^M$ includes the points in $X^N_s$ and $X^{N-1}_r$, and so
we use $M= 2N-1$  collocation points.  
Note that the first basis function has no accompanying residual vector \eqref{eq:offlineresidual}, 
so that from the second onward there are two collocation points selected whenever a new parameter is identified by the greedy algorithm. 
We are now ready to outline the entire R2-ROC method in Algorithm \ref{alg:c:plus:offline2}.

\begin{remark}
The reduced collocation approach in \cite{ChenGottlieb2013} is a specialization that takes 
$\sigma_i(\cdot)$ to be identities, $M = N$, $X^M = X^N_s$. The resulting $M=N$ reduced scheme can be unstable particularly 
when high accuracy (i.e. large $N$) of the reduced solution is desired. 
It can be resolved in special cases by an analytical preconditioning approach \cite{ChenGottliebMaday}. 
The second obvious choice of $X^M$ is to append $X^{N-1}_r$ with one more point such as the maximizer of the first basis. 
Numerical tests (not reported in this paper) also reveal instability of this scheme. 
\end{remark}

%\newpage

\subsubsection{Analysis of the R2-ROC method}

\label{sec:analysis}

With the basis $W_N = \{q_1, \cdots, q_N\}$, the collocation points and the functionals as built in \eqref{eq:point_functional_1} and \eqref{eq:point_functional_2}, we are ready to define a Generalized Empirical Interpolation \cite{MagicPt_2009, MadayMulaTurinici_GEIMSIAM} operator for any admissible function $v(x)$
\begin{equation}
I_N^{\rm RC} [v] = \sum_{i = 1}^N \beta_i^N q_i(x) \mbox{ such that } \sigma_i(I_N^{\rm RC}[v]) = \sigma_i(v) \,\, \forall \, i \in \{1, \cdots, N\}.
\label{eq:RC_GEIM}
\end{equation}

\begin{lemma}
When the differential operator $\calP(u(\bx; \bmu); \bmu)$ is linear (with respect to $u$), the matrix $B$ is lower triangular with unitary diagonal and we have that $I_N[v] = v$ for $v \in {\rm span}\{q_1, \cdots, q_N\}$. If, in addition, the collocation points are taken as $X^M = \{\bx_*^1, \cdots, \bx_*^N\}$, our reduced collocation solution coincides with the GEIM approximation $I_N^{\rm RC}[u^\calN(\bmu)]$ of the truth approximation $u^\calN(\mu)$ when $\bmu = \bmu^i$ for $i \in \{1, \cdots, N\}$. 
\end{lemma}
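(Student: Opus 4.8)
The plan is to treat the three assertions in turn — lower triangularity of $B$, exactness $I_N^{\rm RC}[v]=v$ on ${\rm span}\{q_1,\dots,q_N\}$, and identification of the reduced solution with the GEIM interpolant at the snapshots — each relying on the previous one. For the triangular structure I would induct on the basis index $n$, showing that the leading $n\times n$ block $B^{(n)}=(B_{ij})_{i,j\le n}$ is lower triangular with unit diagonal. The base case is immediate from linearity of $\sigma_1=\sigma_{\bx_*^1}^{\bmu^1}$ and the normalization $q_1=u_1/\sigma_1(u_1)$, which give $B_{11}=\sigma_1(q_1)=1$. For the step, note that choosing $\{\alpha_{n+1,i}\}$ so that $\sigma_i(q_{n+1})=0$ for all $i\le n$ is, by linearity of the $\sigma_i$, precisely the square system $B^{(n)}\bs\alpha=(\sigma_i(u_{n+1}))_{i\le n}$, uniquely solvable because $B^{(n)}$ is invertible by the inductive hypothesis ($\det B^{(n)}=1$). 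This forces the new column of $B$ to vanish strictly above the diagonal, $B_{i,n+1}=0$ for $i\le n$, while the renormalization $q_{n+1}\leftarrow q_{n+1}/\sigma_{n+1}(q_{n+1})$ yields $B_{n+1,n+1}=1$; the new row entries $B_{n+1,j}$, $j\le n$, are unconstrained and form the strictly-lower-triangular part. The one point that is not pure bookkeeping is the standing admissibility hypothesis $\sigma_{n+1}(q_{n+1})\neq 0$ before renormalization (equivalently $\calP_\calN(q_{n+1};\bmu^{n+1})\not\equiv 0$ on $X^\calN$), the usual (G)EIM non-degeneracy assumption, which I would state explicitly.

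For exactness, take $v=\sum_{j=1}^N c_j q_j$. Linearity of each $\sigma_i$ turns the defining conditions $\sigma_i(I_N^{\rm RC}[v])=\sigma_i(v)$ in \eqref{eq:RC_GEIM} into the linear system $B\bs\beta=B\bc$ for the coefficient vector $\bs\beta$ of $I_N^{\rm RC}[v]$; since $\det B=1\neq 0$ we get $\bs\beta=\bc$ and hence $I_N^{\rm RC}[v]=v$. Only invertibility of $B$ is actually used here, triangularity being the convenient route to it.

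For the last assertion, the triangular construction also shows ${\rm span}\{q_1,\dots,q_N\}={\rm span}\{u_1,\dots,u_N\}$ — each inclusion is read off line by line, using non-degeneracy so the normalizing scalars are nonzero — hence $u^\calN(\bmu^k)=u_k$ lies in $W_N$ for every $k\le N$, and by the previous paragraph $I_N^{\rm RC}[u^\calN(\bmu^k)]=u^\calN(\bmu^k)$. It then remains to identify $\widehat{u}_N(\bmu^k)$ with $u^\calN(\bmu^k)$. Writing $u^\calN(\bmu^k)=W_N\bc^*$ and using that $u^\calN(\bmu^k)$ satisfies the truth system \eqref{eq:pdesystem} at $\bmu^k$ on all of $X^\calN$, hence on $X^M\subseteq X^\calN$, we obtain $P_*\!\left(\calP_\calN(W_N\bc^*;\bmu^k)-f(X^\calN)\right)=0$, so $\bc^*$ attains the value $0$ in the minimization \eqref{pde:reduced}; therefore $\bc^*$ is a minimizer and $\widehat{u}_N(\bmu^k)=W_N\bc^*=u^\calN(\bmu^k)=I_N^{\rm RC}[u^\calN(\bmu^k)]$.

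The delicate point — and the one I expect to be the main obstacle — is the uniqueness of the minimizer in this last step. With $X^M=X^N_s$ one has $M=N$, so for linear $\calP$ the problem \eqref{pde:reduced} is a \emph{square} linear system whose matrix $A^{(k)}_{ij}=\calP_\calN(q_j;\bmu^k)(\bx_*^i)$ is \emph{not} the triangular matrix $B$: every row of $A^{(k)}$ uses the single parameter $\bmu^k$ rather than the snapshot-specific $\bmu^i$, so the two agree only in row $i=k$. I would dispatch this either by invoking the well-posedness (nonsingularity of $A^{(k)}$) that is already implicit in speaking of ``the'' reduced collocation solution, or, being more careful, by phrasing the conclusion as: the zero-residual minimizer of \eqref{pde:reduced} equals $u^\calN(\bmu^k)$, which under nonsingularity is the unique output of the scheme. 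Everything else is routine (G)EIM bookkeeping.
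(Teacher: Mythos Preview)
Your treatment of the first two assertions --- the lower-triangular, unit-diagonal structure of $B$ from linearity of the $\sigma_i$ together with the normalizations, and then exactness on ${\rm span}\{q_1,\dots,q_N\}$ via the system $B\bs\beta=B\bc$ --- matches the paper's proof essentially line for line.

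For the third assertion you take a genuinely different route. The paper argues directly that at a snapshot parameter the reduced-collocation conditions $\sigma_{\bx_*^i}^{\bmu}(\widehat{u}_n(\bmu))=\sigma_{\bx_*^i}^{\bmu}(u^\calN(\bmu))$ coincide with the GEIM defining system \eqref{eq:RC_GEIM}, and then concludes from the invertibility of $B$. You instead show that both $\widehat{u}_N(\bmu^k)$ and $I_N^{\rm RC}[u^\calN(\bmu^k)]$ equal the truth snapshot $u^\calN(\bmu^k)$ itself: the GEIM side by exactness on $W_N\ni u_k$, the collocation side because $u_k\in W_N$ drives the residual in \eqref{pde:reduced} to zero. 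Your version has the merit of making explicit the point you flag --- that the square collocation system at a fixed $\bmu^k$ has matrix $A^{(k)}_{ij}=\calP_\calN(q_j;\bmu^k)(\bx_*^i)$ rather than $B$, since every row carries the same parameter $\bmu^k$ instead of the snapshot-specific $\bmu^i$ --- and of isolating the well-posedness (nonsingularity of $A^{(k)}$) as a separate standing hypothesis. The paper's argument is terser and leans on identifying the two systems at the snapshot parameters; yours is a bit longer but is more transparent about exactly which invertibility is actually being invoked.
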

\begin{proof}
When $\calP$ is linear, we have $\sigma_i(q_i) = 1$ and $\sigma_i(q_j) = 0$ when $j > i$ by construction. Therefore, matrix $B$ is lower triangular with unitary diagonal.

When $v \in {\rm span}\{q_1, \cdots, q_N\}$, we have that 
\[
v = \sum_{i=1}^N d_i q_i.
\]
Since $\calP$ is linear, we have $\sigma_i(v) = \sum_{j=1}^N d_j \sigma_i(q_j)$ which means $\vec{\sigma v}= B \vec{d}$ where $\vec{\sigma v} = (\sigma_1(v) \cdots \sigma_N(v))^T$ and $\vec{d} = \{d_1, \cdots, d_N\}^T$. On the other hand, from \eqref{eq:RC_GEIM} we know that, if we assume $I_N^{\rm RC}[v] = \sum_{i=1}^N c_i q_i$, we have that
\begin{equation}
\left(
\begin{tabular}{c}
$c_1$\\
$\vdots$\\
$c_N$
\end{tabular}
\right)
=
B^{-1}
\left(
\begin{tabular}{c}
$\sigma_1(v)$\\
$\vdots$\\
$\sigma_N(v)$
\end{tabular}
\right)
=B^{-1} \vec{\sigma v}
\label{eq:INRC_Solve}
\end{equation}
Plugging $\vec{\sigma v}= B \vec{d}$ completes the first half of the proof.

To prove the second half, we note that the reduced collocation procedure amounts to requiring that
\[
\sigma_{\bx^i}^{\bmu} (\widehat{u}_n(\bmu)) = \sigma_{\bx^i}^{\bmu} (u^\calN(\bmu))
\]
When $\bmu = \bmu^i$, this is identical to the system determining the GEIM approximation \eqref{eq:RC_GEIM}. Given the fact that $B$ is invertible, we conclude that the reduced collocation solution is identical to the GEIM approximation.
\end{proof}

Regarding the error of the reduced solution, we can prove a standard result of interpolation-type. Toward that end, we define the Lebesgue constant
\[
\Lambda_N = \sup_{\bx \in \Omega} \sum_{i=1}^N \left |h_i^N(\bx)\right |
\]
where $\left\{ h_i^N(x) : i = 1, \cdots, N \right\}$ is the Lagrangian basis satisfying
\begin{equation}
(h_1^N \, h_2^N \, \cdots \, h_N^N) B = (q_1 \, q_2 \, \cdots \, q_N).
\label{eq:qh_transform}
\end{equation}
It is straightforward to check that we do have $\sigma_i(h_j) = \delta_{ij}$.

\begin{theorem}
When the differential operator $\calP$ is linear and the collocation points are taken as $X^M = \{\bx_*^1, \cdots, \bx_*^N\}$, our reduced collocation solution satisfies the following estimate.
\begin{equation}
\lVert \widehat{u} (\bmu)- I_N^{\rm RC}[\widehat{u}(\bmu) ]\rVert_{L^\infty} \le (1 + \Lambda_N) \inf_{v \in W_N} \lVert \widehat{u}(\bmu) - v\rVert_{L^\infty}
\end{equation}

\end{theorem}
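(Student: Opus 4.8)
The plan is to read this as a Lebesgue-type (quasi-optimality) estimate for the generalized interpolation operator $I_N^{\rm RC}$ and to establish it via the three classical ingredients: (i) $I_N^{\rm RC}$ is a linear projection onto $W_N$; (ii) its $L^\infty$ operator norm is controlled by the Lebesgue constant $\Lambda_N$; and (iii) the standard Lebesgue-lemma triangle-inequality argument. For (i), I would first note that, since $\calP$ is linear, every interpolating functional $\sigma_i(\cdot)=\calP_\calN(\cdot;\bmu^i)(\bx_*^i)$ is linear, and by the preceding Lemma the matrix $B$ is lower triangular with unit diagonal, hence invertible; therefore the defining system \eqref{eq:INRC_Solve} determines the coefficients of $I_N^{\rm RC}[v]$ uniquely and linearly in the data $(\sigma_1(v),\dots,\sigma_N(v))$, so $I_N^{\rm RC}$ is a well-defined linear map with range in $W_N$. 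The first part of the Lemma already gives $I_N^{\rm RC}[v]=v$ for every $v\in W_N=\mathrm{span}\{q_1,\dots,q_N\}$, so $I_N^{\rm RC}$ is idempotent, i.e. a projection onto $W_N$.

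Next I would pass to the nodal (Lagrangian) form of the operator. Using $\sigma_i(h_j)=\delta_{ij}$ together with \eqref{eq:qh_transform} and \eqref{eq:INRC_Solve}, one gets $I_N^{\rm RC}[v]=\sum_{i=1}^N \sigma_i(v)\,h_i^N$. Taking absolute values pointwise and then the supremum over $\bx\in\Omega$ yields $\lVert I_N^{\rm RC}[v]\rVert_{L^\infty}\le\bigl(\max_i|\sigma_i(v)|\bigr)\,\sup_{\bx\in\Omega}\sum_{i=1}^N|h_i^N(\bx)|=\Lambda_N\,\max_i|\sigma_i(v)|$, which, once the functionals are normalized so as not to expand the sup-norm of their argument, reads $\lVert I_N^{\rm RC}[v]\rVert_{L^\infty}\le\Lambda_N\lVert v\rVert_{L^\infty}$. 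For (iii), given any $v\in W_N$ I would use idempotence to write $\widehat u(\bmu)-I_N^{\rm RC}[\widehat u(\bmu)]=(\widehat u(\bmu)-v)-I_N^{\rm RC}[\widehat u(\bmu)-v]$, take $L^\infty$ norms, invoke the operator bound from (ii), obtain $(1+\Lambda_N)\lVert\widehat u(\bmu)-v\rVert_{L^\infty}$, and finally pass to the infimum over $v\in W_N$. The identical computation with $\widehat u(\bmu)$ replaced by the truth solution $u^\calN(\bmu)$ gives the estimate one actually wants for the reduced-solution error, in view of the Lemma identifying $\widehat u(\bmu)$ with $I_N^{\rm RC}[u^\calN(\bmu)]$ at the snapshot parameters.

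The step I expect to be the real obstacle is (ii): bounding the $L^\infty\!\to\!L^\infty$ norm of $I_N^{\rm RC}$ by $\Lambda_N$. In contrast with the classical EIM/magic-points setting, the interpolating functionals here are not point evaluations but involve the discretized second-order operator $\calP_\calN$, so a priori $|\sigma_i(v)|$ is controlled by a norm of $\calP_\calN v$ rather than by $\lVert v\rVert_{L^\infty}$; making the passage to $\lVert v\rVert_{L^\infty}$ legitimate requires carefully exploiting the normalization built into the construction of the $q_i$ (and hence of the $h_i^N$) so that the functional data and the Lebesgue constant are measured in compatible norms. Once that normalization accounting is settled, ingredient (iii) is entirely routine.
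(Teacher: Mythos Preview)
Your approach is essentially the same as the paper's: the paper also uses the triangle inequality with an arbitrary $v\in W_N$, invokes $I_N^{\rm RC}[v]=v$ and linearity to rewrite $v-I_N^{\rm RC}[\widehat u(\bmu)]=I_N^{\rm RC}[v-\widehat u(\bmu)]$, and then reduces the estimate to bounding the $L^\infty$ operator norm of $I_N^{\rm RC}$ by $\Lambda_N$, which it justifies by pointing to the Lagrangian representation \eqref{eq:INRC_Solve}--\eqref{eq:qh_transform}. One notable difference is that the paper writes this operator-norm step as $\sup_{w\in W_N}\lVert I_N^{\rm RC}[w]\rVert_{L^\infty}/\lVert w\rVert_{L^\infty}$ and simply asserts it equals $\Lambda_N$, without isolating the issue you flag in~(ii); your version is more explicit in deriving the nodal form $I_N^{\rm RC}[v]=\sum_i\sigma_i(v)h_i^N$ and more candid that the passage from $\max_i|\sigma_i(v)|$ to $\lVert v\rVert_{L^\infty}$ is the point requiring care when the $\sigma_i$ involve the discretized differential operator rather than point evaluations. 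The paper does not spell out that normalization argument, so your identification of it as the genuine obstacle is apt, but the overall architecture of the two proofs is the same.
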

\begin{proof}
For any $v \in W_N$, we have 
\begin{alignat*}{1}
\lVert \widehat{u} (\bmu)- I_N^{\rm RC}[\widehat{u}(\bmu) ]\rVert_{L^\infty} \le & \lVert \widehat{u} (\bmu)- v\rVert_{L^\infty}  + \lVert v- I_N^{\rm RC}[\widehat{u}(\bmu) ]\rVert_{L^\infty} \\
= & \lVert \widehat{u} (\bmu)- v\rVert_{L^\infty}  + \lVert I_N^{\rm RC}[v - \widehat{u}(\bmu) ]\rVert_{L^\infty} \\
\le & \left(1 + \sup_{w \in W_N}\frac{\lVert I_N^{RC}[w] \rVert_{L^\infty}}{\lVert w \rVert_{L^{\infty}}} \right)\lVert \widehat{u} (\bmu)- v\rVert_{L^\infty} 
\end{alignat*}
To complete the proof, we just need to show that $\sup_{w \in W_N}\frac{\lVert I_N^{RC}[w] \rVert_{L^\infty}}{\lVert w \rVert_{L^{\infty}}} = \Lambda_N$ which can be verified by recalling equations \eqref{eq:INRC_Solve} and \eqref{eq:qh_transform}.
\end{proof}

These results show that our first set of points $X_s^N$ ensures that, when the differential operator is linear, our reduced collocation solution recovers a specifically designed generalized empirical interpolant of the truth approximation. We finish our analysis by the following remark which indicates the significance of the second set of points $X_r^{N-1}$.
\begin{remark}
{\bf Functionality of the second set of points $X_r^{N-1}$:} We recall that the residual \eqref{eq:offlineresidual} of RB solution at $\bmu$ with $n$ bases is defined as
\[
r_{n}^{\bmu} =\calP_{\N}(\widehat{u}_{n}(\bmu);\bmu)-f(X^\N) = \calP_{\N}(\widehat{u}_{n}(\bmu);\bmu) - \calP_{\N}({u}^\N(\bmu);\bmu).
\]
Assuming we have the error-residual relation, we need to control $\lVert r_n^{\bmu} \rVert_{L^\infty(X^\N)}$ for any $\bmu$. The greedy algorithm with error estimator $\Delta_n^{RR}$ in \eqref{eq:deltaRR} means that we have, for any $\bmu$,
\[
\lVert r_n^{\bmu} \rVert_{L^\infty(X^M)} \le \lVert r_n^{\bmu^{n+1}} \rVert_{L^\infty(X^M)}, \mbox{ a weak version of } \lVert r_n^{\bmu} \rVert_{L^\infty(X^\N)} \le \lVert r_n^{\bmu^{n+1}} \rVert_{L^\infty(X^\N)}.
\]
The latter can be achieved by adopting a stronger $\Delta_n^{RR} \, ({\rm i.e.} = \lVert r_n^{\bmu} \rVert_{L^\infty(X^\N)})$ which would make the online complexity linearly dependent on $\calN$ (thus the algorithm not online efficient).  The choice of the second set of points $X_r^{N-1}$ is critical in maintaining the online-efficiency of $\Delta_n^{RR}$ in \eqref{eq:deltaRR} while providing a stable interpolating procedure for $ r_n^{\bmu} $ in the space ${\rm span}\{r_i^{\bmu^{i+1}}\}_{i=1}^n$.

Indeed, if we denote the EIM interpolation procedure of $\{r_i^{\bmu^{i+1}}\}_{i=1}^n$ in Algorithm \ref{alg:c:plus:offline2} by $J_n$, we have that
\begin{alignat*}{1}
\lVert r_n^{\bmu} \rVert_{L^\infty(X^\N)} & \le \lVert J_n[r_n^{\bmu}] \rVert_{L^\infty(X^\N)} + \lVert r_n^{\bmu} -  J_n[r_n^{\bmu}]\rVert_{L^\infty(X^\N)}\\
& \le \lVert J_n[r_n^{\bmu}] \rVert_{L^\infty(X^\N)} + (1 + \Lambda_n^r) \inf_{v} \lVert r_n^{\bmu} - v \rVert_{L^\infty(X^\N)},
\end{alignat*}
where $\Lambda_n^r$ is the Lebesgue constant of  $\{r_i^{\bmu^{i+1}}\}_{i=1}^n$.Consider that the (classical) greedy algorithm adopted by EIM/GEIM has the tendency of minimizing the Lebesgue constant $\Lambda_n^r$ \cite{MadayMulaPateraYano2015}. We therefore conclude that the EIM procedure of $r_n^{\bmu}$ by $\{r_i^{\bmu^{i+1}}\}_{i=1}^n$  via their EIM points in $X^M$ is effective in generating the RB space and the online solver.
\end{remark}

%\newpage

\subsection{Extension of R2-ROC for time dependent problems}
\label{sec:offline:time}

Given the reduced space $W_n$ and the collocation set $X^M$, 
the semi-discretized R2-ROC solver remains identical to the steady-state case for the time-dependent problem \eqref{eq:timemodel}. That is, we seek the reduced approximation of the solution for any given parameter $\bmu$  in the form of
$$\widehat{u}_n(\bmu, t) = W_{n} \bc_n (\bmu, t).$$ 
The unknown coefficients $\bc_n(\bmu, t) \in {\mathbb R}^{n \times 1}$ is obtained by solving the following optimization problem:
\begin{align}
\bc_n(\bmu, t) = \argmin_{\omega}\parallel P_* \left(W_{n} \omega_t + \calP_{\N}(W_{n} \omega; \bmu)-f(X^\N)\right)\parallel_{\mathbb{R}^M}.
\label{pde:reduced_t}
\end{align}
For full dicretization, our R2-ROC aligns with the parameter-time greedy framework \cite{grepl2007efficient, grepl2005posteriori}, as opposed to POD \cite{Kunisch_Volkwein_POD, nguyen2008efficient} or POD-greedy \cite{grepl2012}. We first denote the (full) set of temporal nodes as ${\mathcal T}_f \coloneqq \{t_i: \, i =0, \cdots, \calN_t\}$ with $t_0$ being the initial time and $\calN_t =T/ \Delta t$ where $\Delta t$ is the temporal step-size. 
We also denote a {\em reduced} set of temporal nodes by ${\mathcal T}_r$ that starts from the empty set and is gradually enriched in the greedy algorithm.

We next describe the R2-based error estimator needed by the greedy algorithm for the time-dependent case as follows. It is extended from the steady-state version \eqref{eq:deltaRR}. 
Indeed, for each $\bmu \in \Xi_{\rm train}$, after its corresponding (reduced) solver of \eqref{pde:reduced_t} with $n$ bases is performed and the {\em reduced} solution $\widehat{u}_n(\bmu, t)$ obtained, we define
\begin{align}
\Delta_n^{RR_t}(\bmu) \coloneqq \sum_{t \in {\mathcal T}_f}  \varepsilon^{RR}(t; \bmu)  \,\, \mbox{ with } \,\, \varepsilon^{RR}(t; \bmu) \coloneqq \lVert P_*r_n(t; \bmu)\rVert_\infty.
\label{eq:delta_t}
\end{align}
Here, $r_n(t; \bmu)\in \mathbb{R}^{\N \times 1}$ denotes the {\em full} residual for $\widehat{u}_n(\bmu, t)$, and $P_*r_n(t; \bmu) \in  \mathbb{R}^{M \times 1}$ its reduced version. 
\begin{remark}
We emphasize that: 
1) The distinctive feature of our scheme, in comparison to e.g. \cite{grepl2005posteriori}, is that we only consider the {\em reduced} residuals, i.e. the residual sampled at our over collocation points; And 2) We automatically have online efficiency, without EIM, when evaluating the error estimator. This is made possible thanks to the collocation framework. 
\end{remark}

We are now ready to describe our greedy algorithm. To initiate the reduced solver construction we start with a deterministically or randomly chosen $\bmu^1$ (similar to the steady-state case) and invoke the truth solver to obtain the snapshots $\{u^\calN(t_i, x; \bmu^1)\}_{i=0}^{\calN_t}$. ${\mathcal T}_r$ is initiated by the time instant when the corresponding snapshot has the largest variation. That is,
\[
  {\mathcal T}_r = \{t_{\bmu^1}^{1}\} \mbox{ where } t_{\bmu^1}^{1} = \argmax_{t \in {\mathcal T}_f} \left( \max_{x \in X^\N} u^\calN(t, x; \bmu^1)- \min_{x \in X^\N} u^\calN(t, x; \bmu^1) \right).
\]
The RB space $W_1$ is initiated with $u^\calN(t_{\bmu^1}^{1}, x; \bmu^1)$. The (first) collocation point is set to be the special GEIM point of this first basis, 
\[
  \bx_\ast^1 = \argmax_{x \in X^\N} |\calP_\calN(u^\calN(t_{\bmu^1}^{1}, x; \bmu^1); \bmu^1)(x)|.
\]
We note that the corresponding collocation functional $\sigma_{\bx}^{\bmu}$, defined in \eqref{eq:functional} for the steady-state case, should be understood as its extension to the time-dependent version
\[
\sigma_{t,\bx}^{\bmu}(v) = v_t + \calP(v(t,x; \bmu); \bmu)(t, \bx).
\]
However, for brevity of notation, we still write it as $\sigma_{\bx}^{\bmu}$ whenever the accompanying $t$ is clear.

\begin{algorithm}[h]
\begin{algorithmic}[1]
\vspace{0.5ex}
\State Choose $\bmu^1$, and set $k_{\bmu^1} = 1$ the first temporal node to be $t_{\bmu^1}^{k_{\bmu^1}} = \argmax_{t \in {\mathcal T}_f} \left( \max_x u^\calN(t, x; \bmu^1)- \min_x u^\calN(t, x; \bmu^1) \right)$. Define $\xi_1\coloneqq u^\mathcal{N}(t_{\bmu^1}^{k_{\bmu^1}}, X^\N;\bmu^1)$. 
\State Find $\bx_*^1=\argmax_{\bx \in X^\N} |\calP_\calN(\xi_1; \bmu^1)|\, \sigma_1(\cdot) = \sigma_{\bx_*^1}^{\bmu^1}(\cdot)$, and let $P_* = [e_{i_1}]^T$, where $i_1$ is the index of $\bx_*^1$.
\State Initialize $m = n = 1, \, X^m = X^n_s =\{\bx_*^1\}$, $W_1 = \left\{\xi_1 \right\}, W_{1,m} = P_*W_1$, and $X_r^0 = \emptyset$.  %\\[0.5ex]
\State  \mbox{\textbf{For}} $n = 2,\ldots, N$  %\\%[0.5ex]
\State  \quad Solve   the reduced problem for $\bc_{n-1} (\bmu, t_k) $. 
\State  $\quad\ \mbox{Find } \bmu^n = \argmax_{\bmu \in \Xi_{\rm train}}  \Delta_{n-1}^{RR_t}(\bmu)$, and a new temporal node $t_{\bmu^n}^{k_{\bmu^n}} = \arg \max_{t \in {\mathcal T}_f} {\varepsilon^{RR}(t;\bmu^n)}$. %\\%[.5ex]
\State  $\quad\ \mbox{Solve }  \xi_n=u^\mathcal{N} (t_{\bmu^n}^{k_{\bmu^n}}, X^\N;\bmu^n)$. %\\%[0.5ex]
\State  $\quad\ \mbox{Compute a generalized interpolatory residual for } \xi_n: \, \mbox{find } \{\alpha_j\} \mbox{ and let } \xi_n = \xi_n - \sum_{j=1}^{n-1}\alpha_j \xi_j$ \mbox{so that } $\sigma_i(\xi_n)=0$ for $i \in \{1, \cdots, n-1\}$. Find $\bx_*^n=\argmax_{\bx \in X^\N/X^m} |\calP_\calN(\xi_n; \bmu^n)(\bx)|, \sigma_n(\cdot) = \sigma_{x_*^n}^{\bmu^n}(\cdot)$, $\xi_n=\xi_n/\sigma_n(\xi_n)$. Let $X^n_s = X^{n-1}_s \cup \{\bx_*^n\}$, and $i_1$ be the index of $\bx_*^n$.%\\[0.5ex]
\State  $\quad\ \mbox{Form the full residual vector } r_{n-1} = \left(\widehat{u}_{n-1}\right)_t(t_{\bmu^n}^{k_{\bmu^n}};\bmu^n) + \calP_{\N}(X^\N, \widehat{u}_{n-1}(t_{\bmu^n}^{k_{\bmu^n}};\bmu^n);\bmu^n)-f(X^\N, t_{\bmu^n}^{k_{\bmu^n}})$. 
Compute an interpolatory residual $r_{n-1}: \mbox{find } \{\alpha_j\} \mbox{ and let } r_{n-1} = r_{n-1} - \sum_{j=1}^{n-2}\alpha_j r_j$ \mbox{so that } $r_{n-1}(X^{n-2}_r)=0$. Find $\bx_{**}^n=\argmax_{\bx \in X^\N/\left\{X^m,\bx_{*}^n\right\}} |r_{n-1}|$.Let  $r_{n-1}=r_{n-1}/ r_{n-1}(\bx_{**}^n)$, and $X^{n-1}_r =X^{n-2}_r \cup \{\bx_{**}^n\}$. $i_2$ is the index of $\bx_{**}^n$.%\\%[0.5ex]
\State  $\quad\ \mbox{Update } W_{n} = \{W_{n-1},\xi_n\}, m=2n- 1, X^m = X^n_s \cup X_r^{n-1},P_* = [P_*; \,\, (e_{i_1})^T; \, \, (e_{i_2})^T]$. %\\%[0.5ex]
\State  \mbox{\textbf{End For}}
\end{algorithmic}
\caption{R2-ROC algorithm for time dependent problems}
\label{alg:c:plus:offline:time}
\end{algorithm}
Once these ingredients are in place with the first pair $(\bmu^1, t^1_{\bmu^1})$ determined, we can solve the reduced problem \eqref{pde:reduced_t} for every $\bmu \in \Xi _{\rm train}$ (with a one-dimensional RB space $W_1$).  Similar to the traditional greedy algorithm, the next step is to determine the subsequent $(\bmu, t)$ pairs. Our greedy algorithm, as seen in Algorithm \ref{alg:c:plus:offline:time}, manifests itself in the following three aspects:
\begin{itemize}
\item {\bf Greedy in $\bmu$:} 
Our greedy choice for the $\bmu$-component of the $(\bmu, t)$ pair is through maximizing $\Delta_n^{RR_t}(\bmu)$ over the training set $\Xi _{\rm train}$:
\[
  \bmu^{n+1} = \argmax_{\bmu \in \Xi _{\rm train}} \Delta_n^{RR_t}(\bmu).
\]
\item {\bf Greedy in $t$:} 
Given the greedy choice $\bmu^{n+1}$ and the {\em reduced} solution $\widehat{u}_n(\bmu^{n+1}, t) = W_{n} \bc_n (\bmu^{n+1}, t)$ for all time levels $t \in {\mathcal T}_f$, the greedy $t$-choice is given by 
\begin{align}
t_{\bmu^{n+1}}^{k_{\bmu^{n+1}}} \coloneqq \argmax_{t \in {\mathcal T}_f}\left\{\varepsilon^{RR}( t;\bmu) \coloneqq {\lVert P_* r_n (t;\bmu^{n+1})\lVert}_\infty\right\}, \mbox{ and } {\mathcal T}_r = {\mathcal T}_r \bigcup \{t_{\bmu^{n+1}}^{k_{\bmu^{n+1}}}\}.
\label{eq:reducedresidual}
\end{align}
\item {\bf $X^M$ expansion:} With the greedy choice $(\bmu^{n+1}, t_{\bmu^{n+1}}^{k_{\bmu^{n+1}}})$, we solve for the truth approximations $u(t, X^\N; \bmu^{n+1})$ for $t \le t_{\bmu^{n+1}}^{k_{\bmu^{n+1}}}$. The expansion of $X^M$ by two more colocation points, with one from the GEIM procedure of the solution $u(t_{\bmu^{n+1}}^{k_{\bmu^{n+1}}}, X^\N; \bmu^{n+1})$ with the particular functionals $\{\sigma_{\bx_*^i}^{\bmu^i}(\cdot)\}_{i=1}^n$, and the other from that of the residual $r_n(t_{\bmu^{n+1}}^{k_{\bmu^{n+1}}}; \bmu^{n+1})$, is identical to the steady state case. 
\end{itemize}

\begin{remark}
Here, $k_{\bmu^{n+1}} \ge 1$ is introduced to accommodate the possibility that multiple temporal nodes might be selected for the same $\bmu$  (at different rounds of the greedy algorithm). 
We note in particular that, consistent with typical greedy scheme, we choose one (as opposed to multiple) maximizer in \eqref{eq:reducedresidual}. However, as we proceed with building up the reduced solution space, the same $\bmu$ (and a different temporal node) may be chosen by the greedy algorithm at a later step due to the lack of resolution of its corresponding temporal history. 
\end{remark}

\subsection{Other related techniques}
\label{sec:compare}

There are other model reduction techniques for both steady and trasient problems such as Proper Orthogonal Decomposition (POD) \cite{Kunisch_Volkwein_POD}, system-theoretic approaches including balanced truncation, moment matching or Hankel norm approximation \cite{BennerGugercinWillcox2015}. RBM differentiates itself, particularly for parametric problems, by featuring rigorous {\em a posteriori} error estimations, the resulting greedy algorithm, and the ability to compute the theoretically smallest number of full order solutions dictated by the Kolmogorov $n$-width of the solution manifold. Nonlinear problems bring some additional challenges, mainly in that a high dimensional reconstruction of the surrogate quantities is often needed each time the nonlinearity is evaluated. Sampling-based approximation techniques were developed to mitigate the resulting loss of efficiency. They include the Empirical Interpolation Method and its discrete variants \cite{Barrault2004, grepl2007efficient, ChaturantabutSorensen2010, PeherstorferButnaruWillcoxBungartz2014} and  Hyper-Reduction \cite{ryckelynck2005priori, Ryckelynck2009, CARLBERG2013623} which are known to be equivalent to DEIM under certain conditions \cite{Fritzen, Dimitriu2017}. Other approaches include POD coupled with ``the best interpolation points'' approach  \cite{nguyen2008efficient, galbally2010non}, Gappy-POD \cite{Everson1995}, Missing Point Estimation (MPE) \cite{Astrid2008MissingPoint} or Gauss–Newton with approximated Tensors (GNAT) \cite{CarlbergBouMoslehFarhat2011, CARLBERG2013623}. Most of these methods work by first identifying a subset of the important features of the nonlinear function, and then constructing an approximation of the full solution based solely on an evaluation of these few components.

The R2-ROC method presented in this paper can be viewed as adopting hyper reduction for {\em reduced} residual minimization. Indeed, instead of enforcing that the full residual is small in either a weak or strong formulation, we identify its selected entries and ensure that an accurate evaluation of the residual on that subset is small. This is not the first time this type of idea is explored. For example, \cite{Astrid2008MissingPoint, astrid2004fast} uses a collocation of the original equations based on missing point interpolation and is followed by a Galerkin projection. The authors in \cite{ryckelynck2005priori} obtain the solution snapshots and collocation points through an adaptive algorithm in the finite element framework. It was also applied to nonlinear dynamical systems with randomly chosen collocation points \cite{bos2004accelerating}. 
However, the proposed R2-ROC differs from these existing works. The first distinctive feature is that the  basis functions and collocation points  are determined hierarchically via a greedy algorithm guided by reduced residual minimization problems that gradually increase in size. 
It tailors the Generalized EIM procedure \cite{MagicPt_2009, MadayMulaTurinici_GEIMSIAM} to our setting via a set of carefully designed interpolating functionals. 
In comparison, the existing approaches obtain basis functions through POD-type techniques and then compute the whole set of collocation  points  all at once. The second distinctive feature is that the only step during the offline process that depends on the full order model is when we calculate a new high fidelity basis.

\section{Numerical results}
\label{num:final}

We test the R2-ROC method on nonlinear steady-state and time-dependent problems, respectively in Sections \ref{numerics:steady} and \ref{numerics:timedep}. The particular equations include the classical viscous Burgers' equation and nonlinear convection diffusion reaction equations. 

\subsection{R2-ROC for steady-state nonlinear problems}
\label{numerics:steady}
In this Section, we report the test results of R2-ROC on steady-state problems while comparing it with benchmark algorithms.

\subsubsection{Viscous Burgers' equation}
First, we test it on the one-dimensional (viscous) Burgers' equation,
  \begin{equation}
 \begin{split}
 u u_x & = \bmu u_{xx},\\
 u(x=-1) & = 1, \,\,\,\, u(x=1) = -1.
 \end{split}
 \label{eq:burgers}
\end{equation} 
Here the viscosity parameter $\bmu$ varies on the interval $\calD = [0.05,1]$. The computational domain $[-1,1]$ is divided uniformly into $\calN+1$ intervals with grid points denoted by 
\[
\{x_0, x_1, \dots, x_{\calN+1}\}.
\]
With $h = \frac{2}{\calN+1}$, the following  finite difference discretization based on the conservative form of equation \eqref{eq:burgers}, $\left( \frac{u^2}{2} \right)_x - \bmu u_{xx} = 0$, is then used 
\begin{equation}
\frac{u_{i+1}^2 -u_{i-1}^2}{4 h}- \bmu \frac{u_{i-1} -2u_i +u_{i+1}}{h^2}=0, \quad i \in \{1, \dots, \calN\}. 
\label{eq:conservative}
\end{equation}
This leads to a nonlinear truth solver of size $\calN$. The parameter domain $\calD$ is sampled 50 times logarithmically spaced to form the training set for the Offline procedure.  We test our method on a subset of $\Xi_{\rm test}$ of $\calD$ that does not intesect with the training set $\Xi_{\rm train}$. 
We compute the relative errors $E(n)$ over all $\bmu$ in $\Xi_{\rm test}$ of the RB solution $\widehat{u}_n(\bmu)$ in comparison to the high fidelity truth approximation. That is,
\begin{equation}
E(n) =  \max_{\bmu \in \Xi_{\rm test}}\left\{\frac{\| u(\bmu) - \widehat{u}_n(\bmu)\|_\infty}{\|u\|_{L^\infty(\Xi_{\rm test}, L^\infty(\Omega))}}\right\}
\label{eq:error:steadyburger}
\end{equation}
where 
\[
||u||_{L^\infty(\Xi_{\rm test}, L^\infty(\Omega))} = \max_{\bmu \in \Xi_{\rm test}} \|u(\bmu) \|_\infty.
\]

Error curves and the distribution of the first $N=10$ selected parameters with $\calN = 100$ are showed in Figure \ref{fig:steadyBurger}. It shows a clear exponential convergence as $n$ increases and a concentration of the selected $\mu$ values toward the lower end of the parameter domain. We note that the distributions of chosen parameters between the traditional RBM and the new R2-ROC are very much similar which underscores the reliability of our proposed approach. 
\begin{figure}[!htb]
\centering
\includegraphics[width=0.32\textwidth]{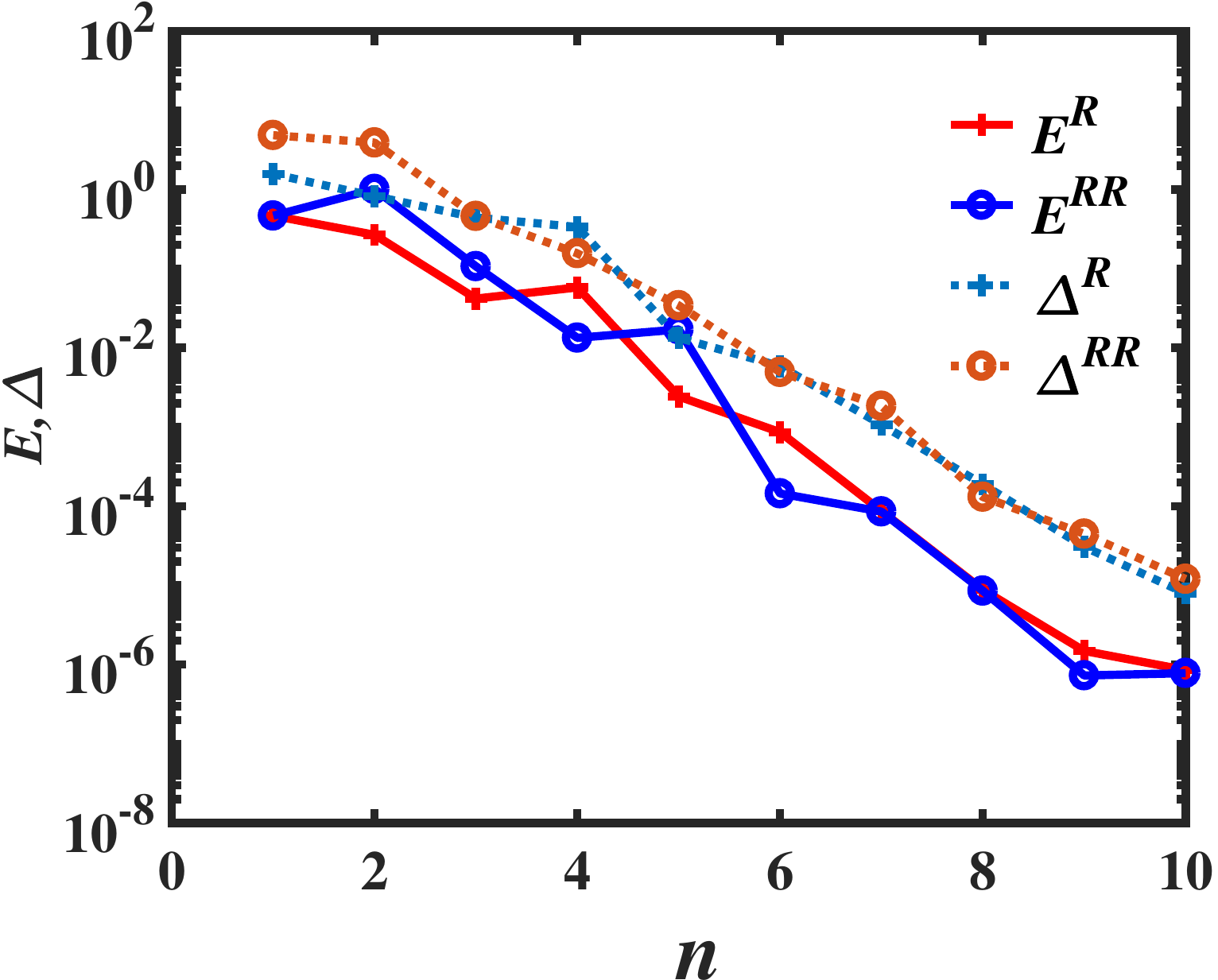}
\includegraphics[width=0.33\textwidth]{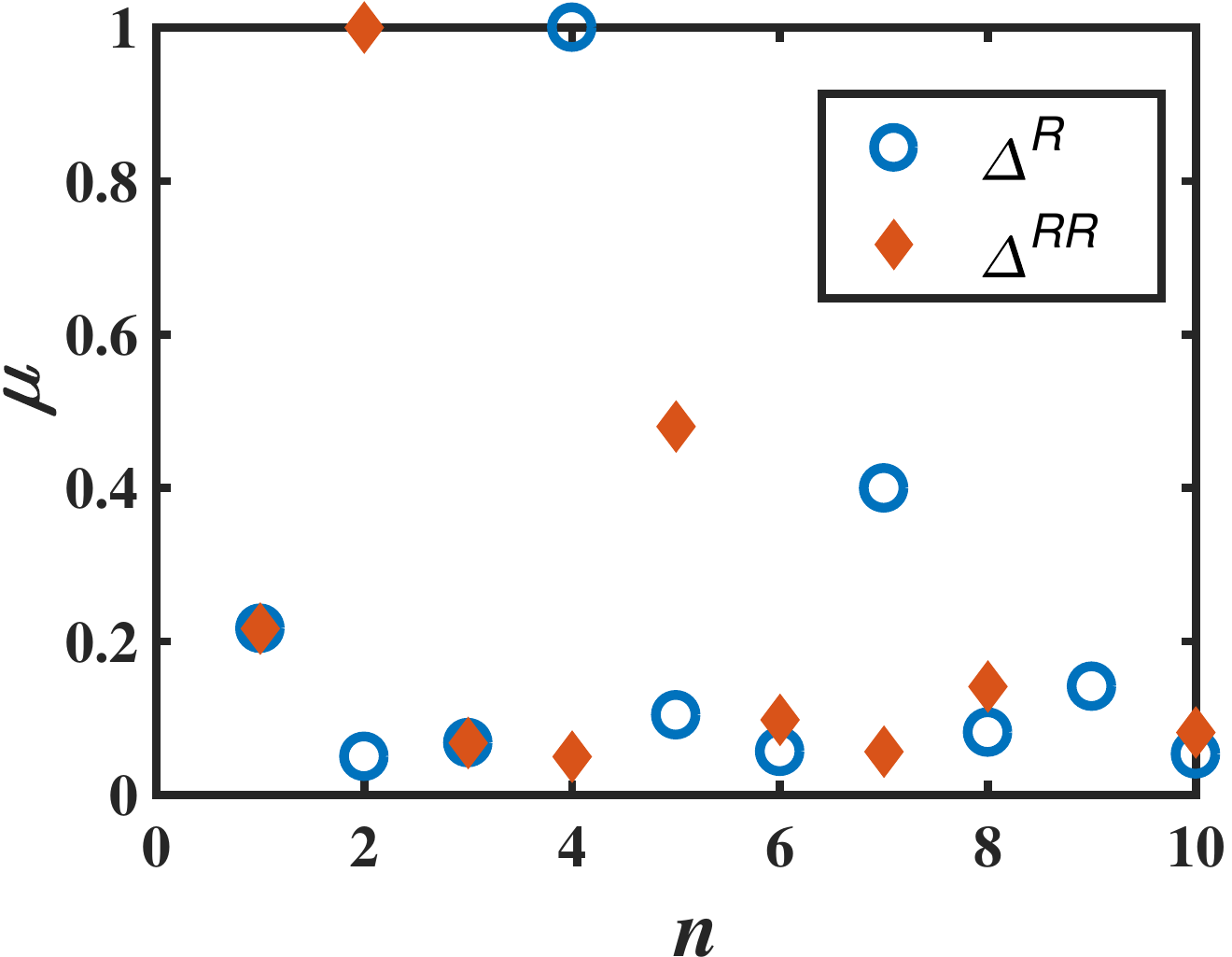}
\includegraphics[width=0.33\textwidth]{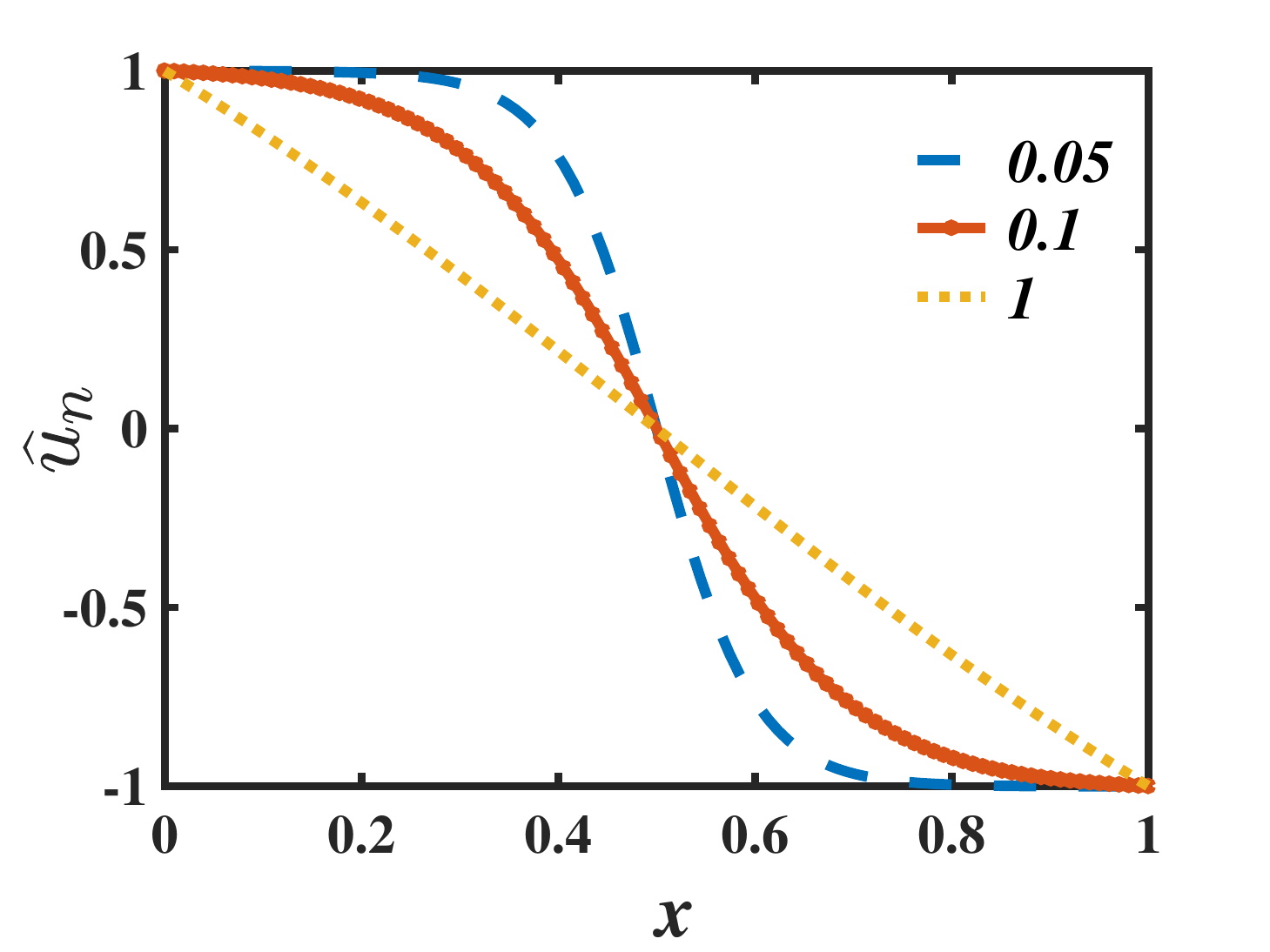}
\caption{Steady viscous Burgers' result. (Left) Histories of convergence for the error and error estimator for the traditional residual-based RBM and proposed R2-ROC. Here, $E^R$ and $E^{RR}$ refer to the $E(n)$ in \eqref{eq:error:steadyburger} with the reduced solution $\widehat{u}_n$ constructed by following the residual-based error estimator $\Delta^R$ and R2-based error estimator $\Delta^{RR}$, respectively. (Middle) Distribution of selected parameters $\bmu^n$, using estimator $\Delta^R$ and $\Delta^{RR}$, as a function of $n$. (Right) Sample RB solutions at three parameter values.}
\label{fig:steadyBurger}
\end{figure}

\subsubsection{Nonlinear reaction diffusion equations}

 Here we consider the following cubic reaction diffusion,
\begin{equation}\label{eq:CRD}
\begin{split}
-\mu_2 \Delta u +u{(u-\mu_1)}^2 & = f(\bx) \mbox{ in } \Omega := [-1,1]\times [-1,1],\\
u & = 0 \mbox{ on } \partial \Omega.
\end{split}
\end{equation}
We take $f(\bx)=100\sin(2\pi x_1)\cos(2\pi x_2)$, and $\D$ is set to be $[0.2,5]\times [0.2,2]$, and discretized by a $128 \times 64$ uniform tensorial grid. Denoting the step size along the $\mu_1$ direction by $h_1$, and the other by $h_2$, the training set and test set are given by 
\begin{align*}
  \Xi_{\rm train} &=  (0.2:4h_1:5) \times (0.2:4h_2:2), \\
  \Xi_{\rm test} &=   ((0.2+2h_1):4h_1:(5-2h_1)) \times ((0.2+2h_2):4h_2:(2-2h_2)),
\end{align*}
  where $(a:h:b)$ denotes an equidistant mesh over $[a,b]$ with stepsize $h$. The nonlinear solver, based on the $5$-point stencil with $\sqrt{\calN}$ interior points at each direction of $\Omega$, for the {high fidelity truth approximation} linearizes, at the  $(\ell+1)^{\rm th}$ iteration, the equation according to
\begin{equation}
-\mu_2 \Delta u^{(\ell+1)} +g'(u^{(\ell)})u^{(\ell+1)}=g'(u^{(\ell)})u^{(\ell)}-g(u^{(\ell)},\mu_1)+f(\bx)
\label{Operator2}
\end{equation}
where $g(u;\mu_1)=u{(u-\mu_1)}^2$.

Relative errors of the RB solution $E(n)$ with $K = \sqrt{\calN} = 400$ are displayed in Figure \ref{2relativeerror} top left showing steady exponential convergence for R2-ROC that is on par with the traditional RBM.
The set of selected parameters are shown in Figure \ref{2relativeerror} top middle, while the collocation points are shown on the bottom row. We note again that the distributions of chosen parameters between the traditional residual-based scheme and the more nascent R2-based scheme are quite similar for this example underscoring the reliability of R2-ROC.

Lastly, we showcase the vast saving of the offline time for the R2-ROC approaches. Toward that end, the comparison in cumulative computation time for the traditional residual-based RBM, R2-ROC, and the {high fidelity truth approximations} is shown in Figure \ref{2relativeerror} top right. 
The initial nonzero start of the R2-ROC is the amount of its offline time. 
We observe that the ``break-even'' number of runs for R2-ROC is much smaller than that of the traditional RBM. 
The difference in this ``break-even'' point is because the overhead cost, devoted to calculating $\Delta_n^{RR}$ (for R2-ROC), is significantly less than that for $\Delta_n^R$.  The latter involves (an offline-online decomposition of) the calculation of the full residual norm while the former only requires obtaining an $N\times 1$ vector and evaluating the residual at numbers of points proportional to the reduced space dimension. 
Though R2-ROC has a much more efficient offline procedure than the  residual-based ROC, their online time for any new parameter is comparable, see Table~\ref{time2}.  
The results also confirm that time consumption of the online ROC methods is independent of $\sqrt{\calN}$, that is the method is online efficient. 
Here in the table, we present the online calculation time for the different algorithms in two different parameter regimes. The first regime is when $\mu_1$ is large and $\mu_2$ small, in particular we choose $\mu_1=4.55, \mu_2=0.42$. The second regime has the relative sizes reversed. The reduced solver requires $27$ iterations for the nonlinear system in the first regime, while only requiring $8$ iterations in the second regime leading to the noticeable difference in the full-order time consumption. It also means that the speedup factor of R2-ROC varies. But they range  between $3000 \sim 12000$ when $\sqrt{\calN}=400,\, 800$.

\begin{figure}[!htb]
\centering
\includegraphics[width=0.32\textwidth]{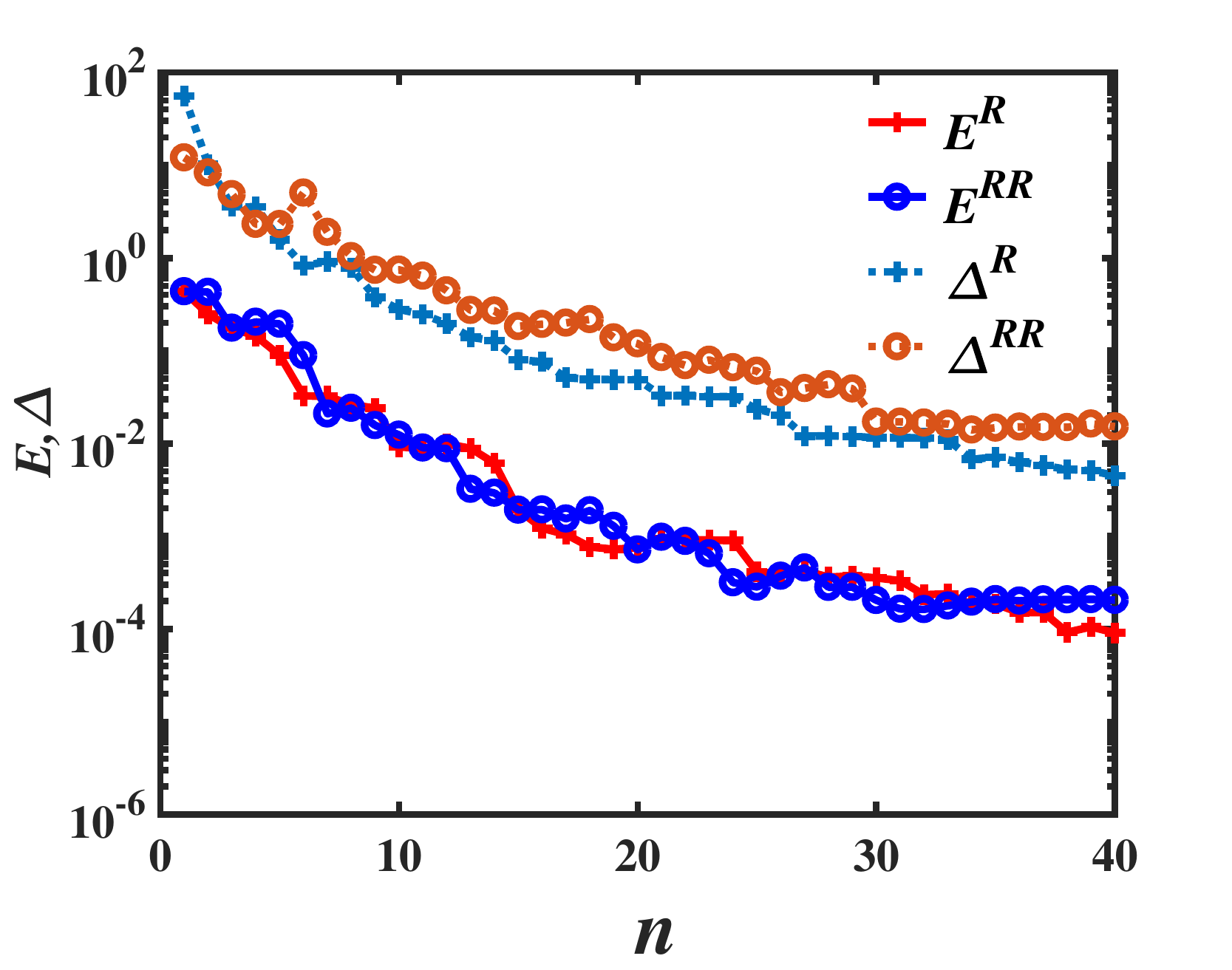}
\includegraphics[width=0.33\textwidth]{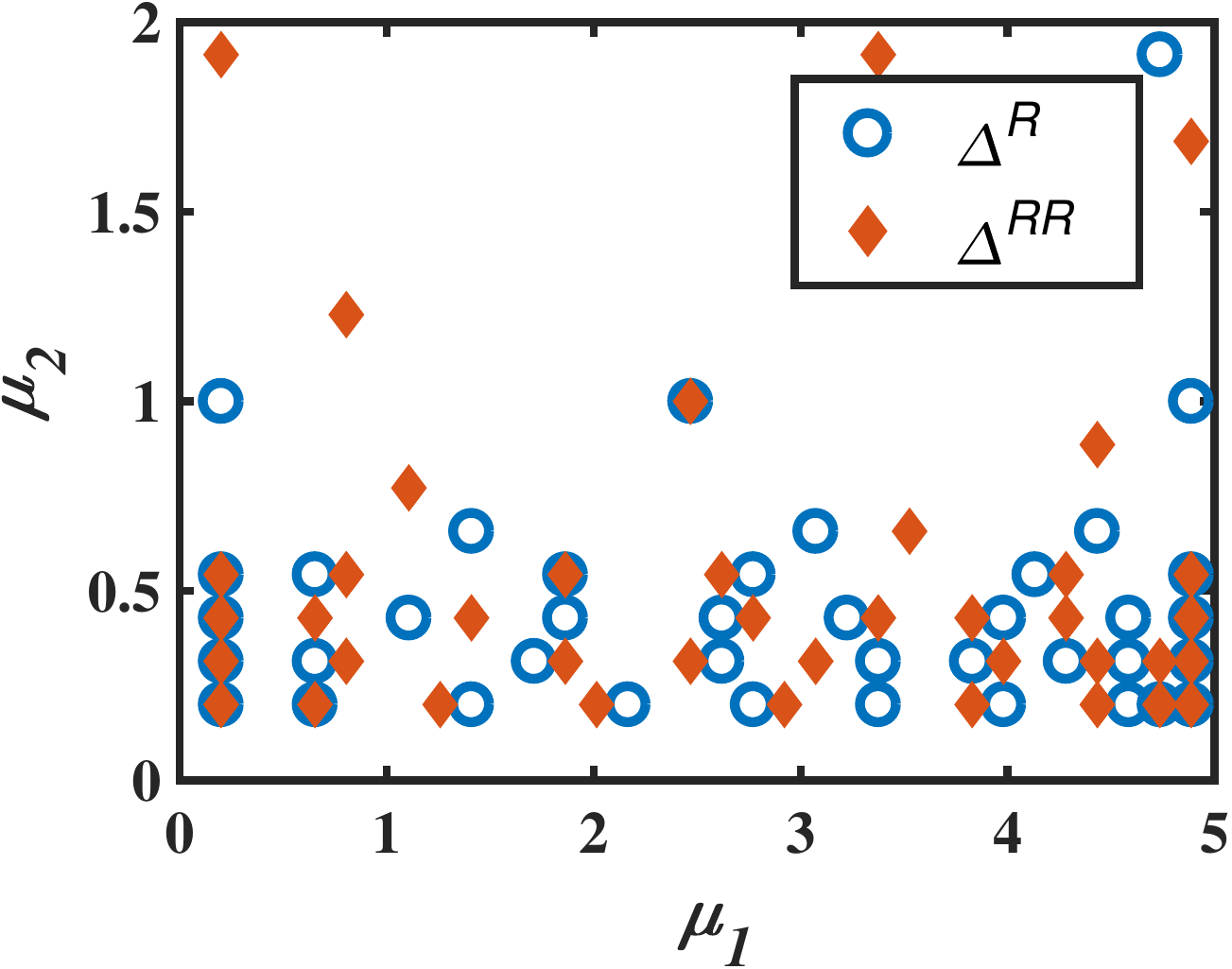}
\includegraphics[width=0.335\textwidth]{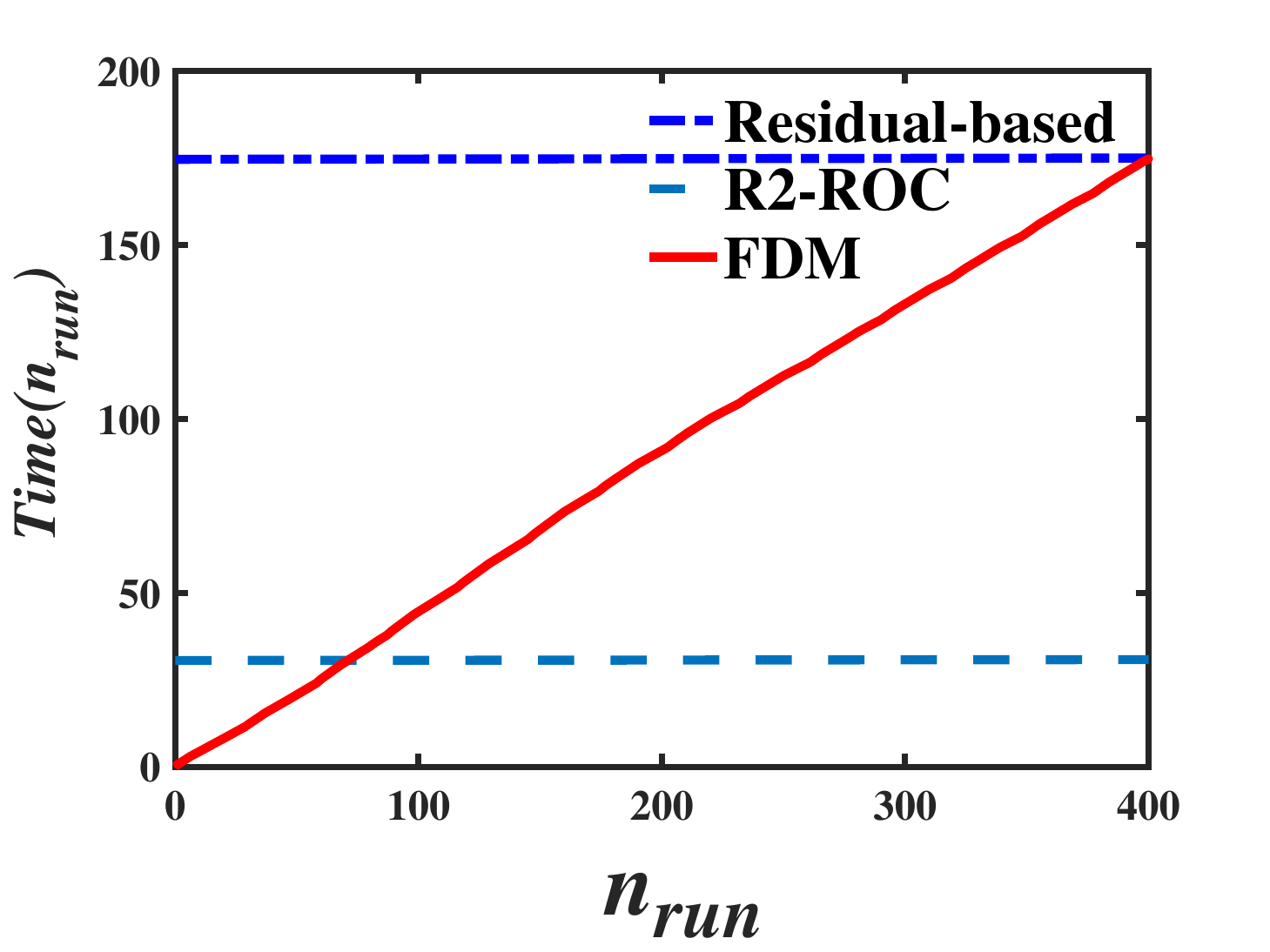}\\
 \includegraphics[width=0.49\textwidth]{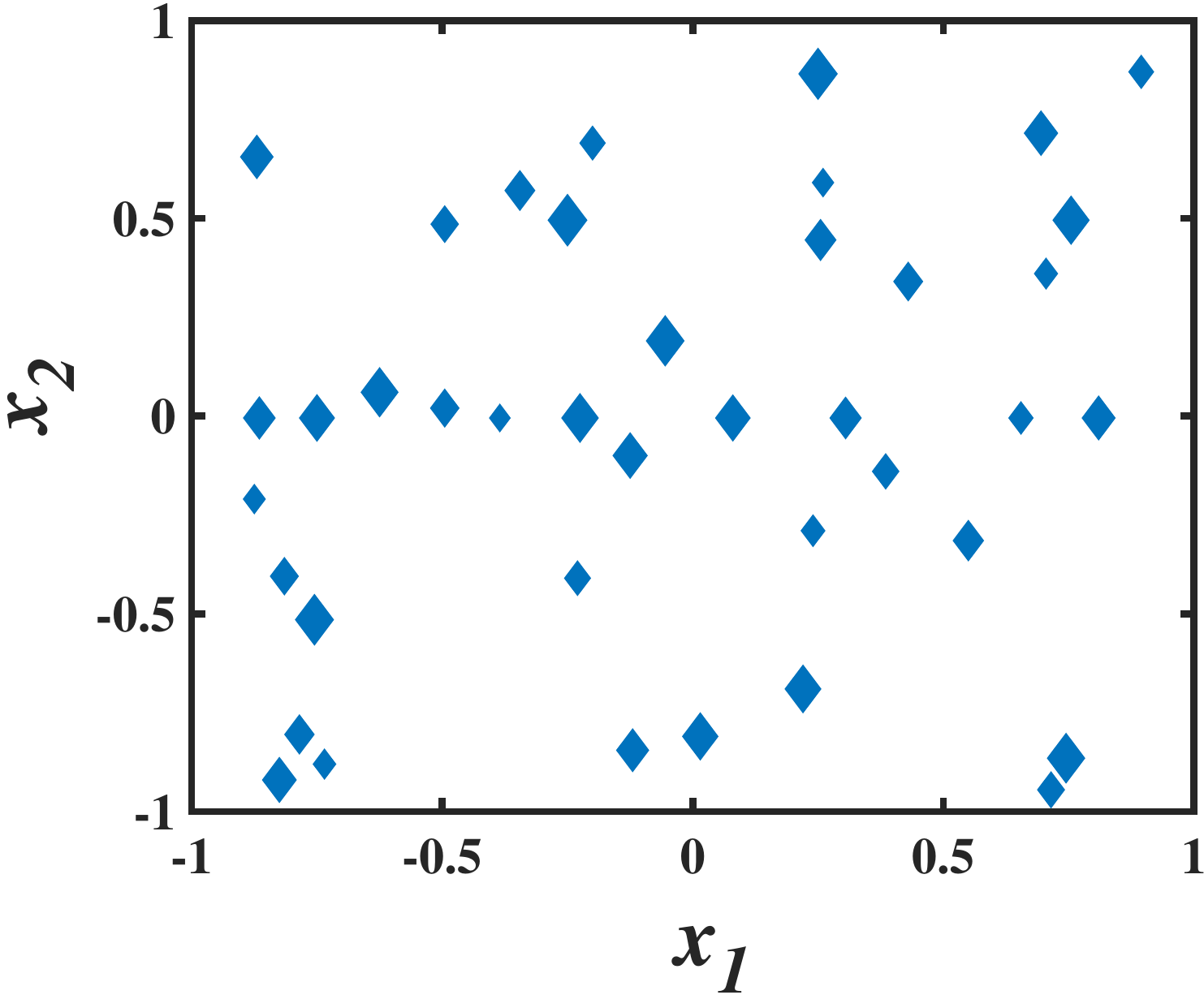}
\includegraphics[width=0.49\textwidth]{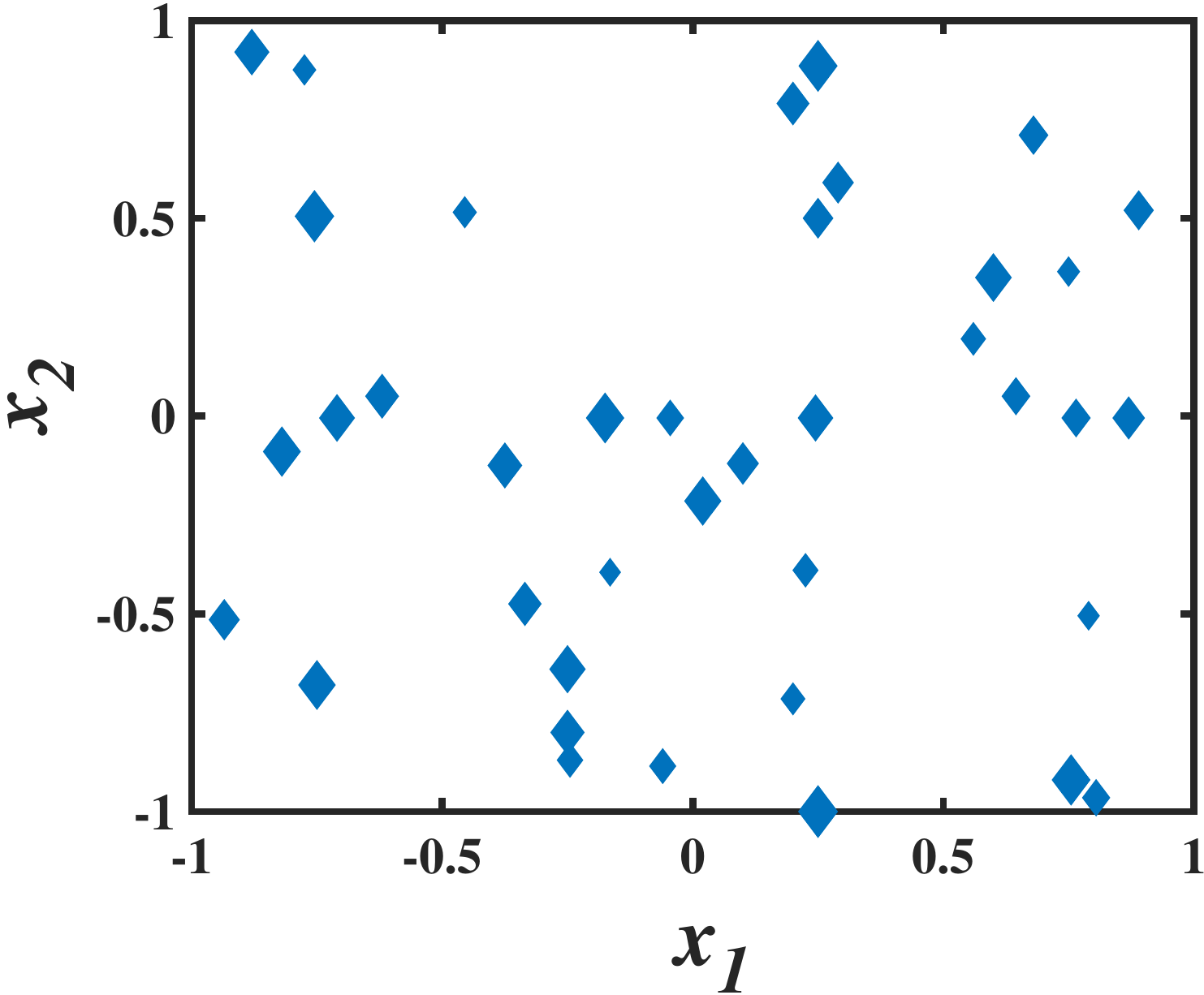}
\caption{Top row:(Left) comparison of the histories of convergence with $\sqrt{\calN} = 400$ for the errors and the error estimator for the ROC method. Here, $E^R$ and $E^{RR}$ refer to the $E(n)$ in \eqref{eq:error:steadyburger} with the reduced solution $\widehat{u}_n$ constructed by following the residual-based error estimator $\Delta^R$ and R2-based error estimator $\Delta^{RR}$, respectively. (Middle) Selected $N(=40)$ parameters of the ROC method for residual-based and R2-based approaches. (Right) cumulative runtime of the FDM, the residual-based, and R2-based RBM. Bottom row: selected $40$ collocation points $X^M_s$ from solutions (Left) and $39$ collocation points $X^M_r$ from residuals (Right).}
\label{2relativeerror}
\end{figure}

\begin{table}[!htb]
	\centering
		\begin{tabular}{ccccc}
		\hline
$(\mu_1, \mu_2)$ &~~$K$~~&Residual-based ROC &  ~ R2-ROC~ &~Direct FDM~~~~~  \\ \hline
\multirow{3}{*}{$(4.55, 0.42)$} &200	&0.003150  &0.004781 &2.310034 \\ 
&400	&  0.003067  &0.003931 & 11.779558 \\ 
&800	&  0.003258  &0.004185 &53.727031 \\ \hline
\multirow{3}{*}{$(1, 1.82)$} &200	&0.001125  & 0.001416 &0.662095 \\ 
&400	& 0.001141 & 0.001299 &3.338956  \\ 
&800	&0.001207  &0.001732 & 15.173460 \\ \hline
	\end{tabular}
	\caption{Online computational times (seconds) with different grid sizes $K$, when $N=40$. }
	\label{time2}
\end{table}

\subsubsection{Numerical comparison with POD and random generation} 

To further establish numerically the reliability of the R2-ROC algorithm, we compare it with two alternative methods of building the reduced basis space. On one end, POD \cite{BerkoozHolmesLumley1993, Kunisch_Volkwein_POD, WillcoxPeraire2002, LiangPOD} based on an exhaustive selection of snapshots (i.e. we include all solutions $u^\N(\bmu)$ for $\bmu \in \Xi_{\rm train}$)  produces the best reduced solution space and thus the most accurate, albeit costly, surrogate solution. We note that this version of POD only serves as reference and is in general not feasible as the full solution ensemble must be generated.  On the other end, a random selection of $N$ parameters as our RB snapshots is a fast but crude method. 
Comparison results of two steady-state test problems above are shown in Figure \ref{1compareerror}. Not surprisingly, the exhaustive POD is the most accurate. 
Our R2-ROC is one order of magnitude worse than POD, but in fact slightly better or comparable to the the best possible random generation. It is roughly one order of magnitude better than the median performance of random generations. 

\begin{figure}[!htb]
\centering
\includegraphics[width=0.49\textwidth]{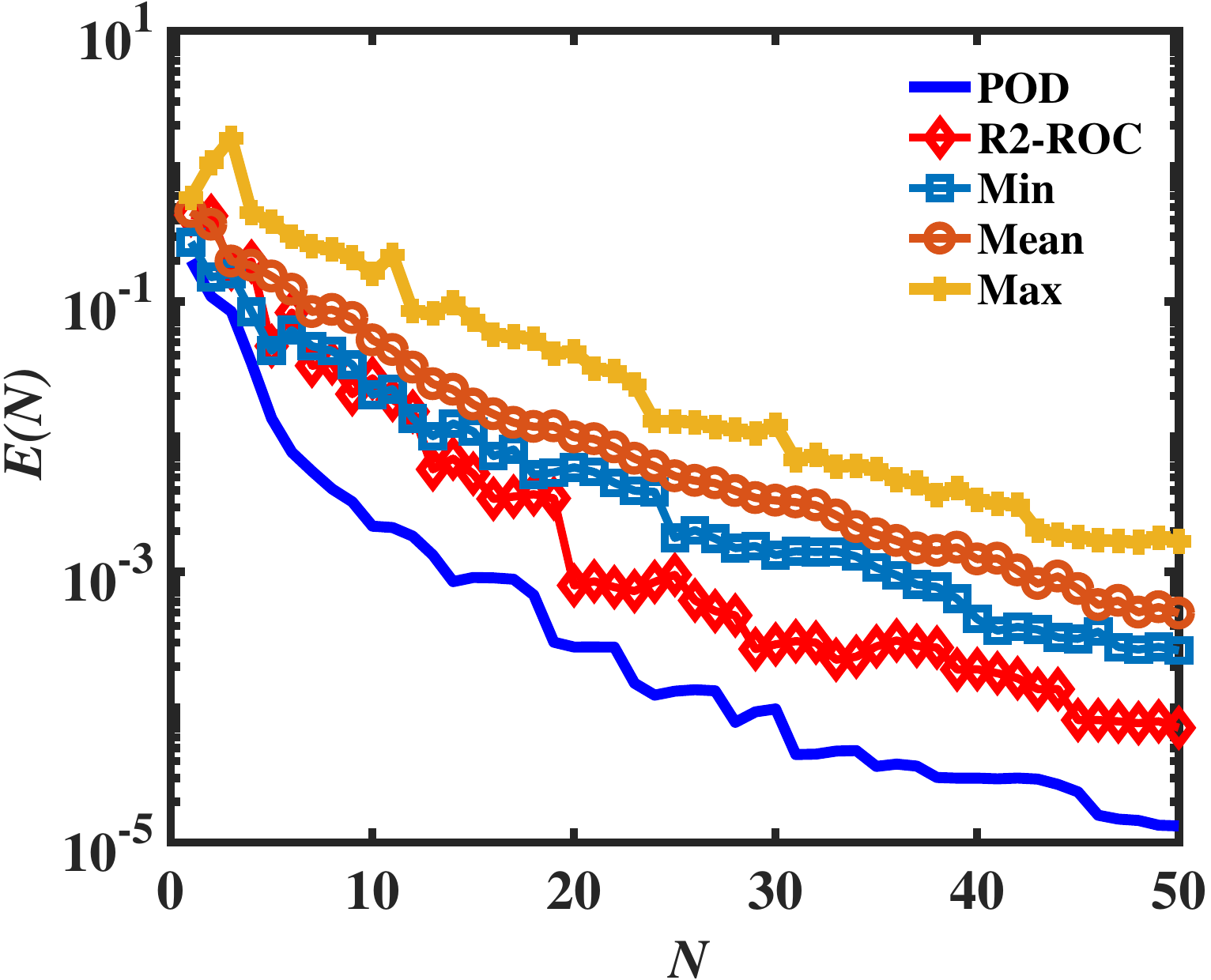}
\includegraphics[width=0.49\textwidth]{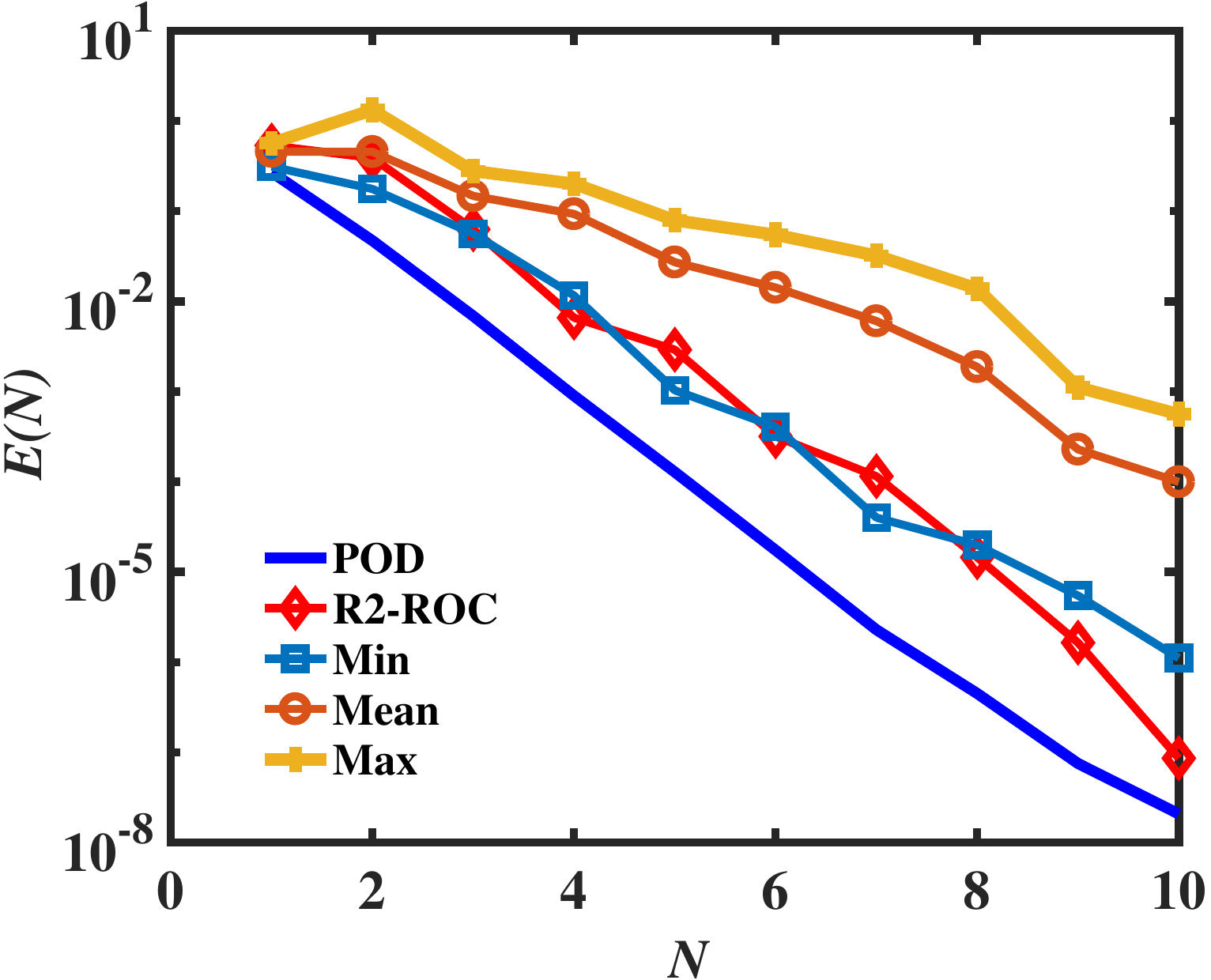}
  \caption{Convergence comparison for the R2-ROC, exhaustive POD and (best, median, and worst cases of) random generation approaches. On the left is for cubic reaction diffusion \eqref{eq:CRD} with $\sqrt{\calN}=400$, with the right being for steady viscous Burgers' equation \eqref{eq:burgers} with $\calN=100$. 
  }
\label{1compareerror}
\end{figure}

\subsection{Time dependent nonlinear problems}
\label{numerics:timedep}
In this section, we test the time-dependent equations corresponding to stationary problems in the last section, namely viscous Burgers' and cubic reaction diffusion equations.

\subsubsection{Viscous Burgers' equation}

We test the viscous Burgers' equation adopting settings similar to \cite{peherstorfer2019sampling, nguyen2009reduced}
\begin{equation}
\begin{split}
u_t + u u_x & = \mu u_{xx} + f(x),~ (x, t, \mu) \in (0,1) \times (0,1] \times \calD,\\
u(x,t=0; \mu) & =0,\\
u(0, t; \mu) = \alpha, \,\, & \,\, u(1, t; \mu) = \beta.
\end{split} 
\end{equation} 
The authors of \cite{peherstorfer2019sampling} takes $\calD = [0.1,1], f = 0, T = 1, \Delta t =10^{-4}, (\alpha, \beta) = (-1, 1)$ and monitor the average error in a Frobenius norm-based metric,
\begin{align*}
  \textrm{Error} = \frac{1}{m_{test}}\sum_{i=1}^{m_{test}} \frac{||u(\cdot,\cdot;\bmu) -\widehat{u}(\cdot,\cdot;\bmu)||_F}{||u(\cdot,\cdot;\bmu)||_F},~~ 
  \| v(\cdot,\cdot) \|^2_F &\coloneqq \sum_{\bx \in X^\N, t_i \in {\mathcal T}_f} v(\bx,t_i)^2
\end{align*}
while the authors in \cite{nguyen2009reduced} set $\calD = [0.005,1], f = 1, T = 2, \Delta t = 2 \cdot 10^{-6}, (\alpha, \beta) = (0,0)$ and observe the error in $L^2$. We investigate R2-ROC results from both of these setups. The results are showed in Figure \ref{fig:ben}. These results are similar to those of \cite{peherstorfer2019sampling, nguyen2009reduced}. When $N=10$, R2-ROC  attains an accuracy around $10^{-1}$ which gets much better when $N=15$.

\begin{figure}[!htb]
\centering
\includegraphics[width=0.49\textwidth]{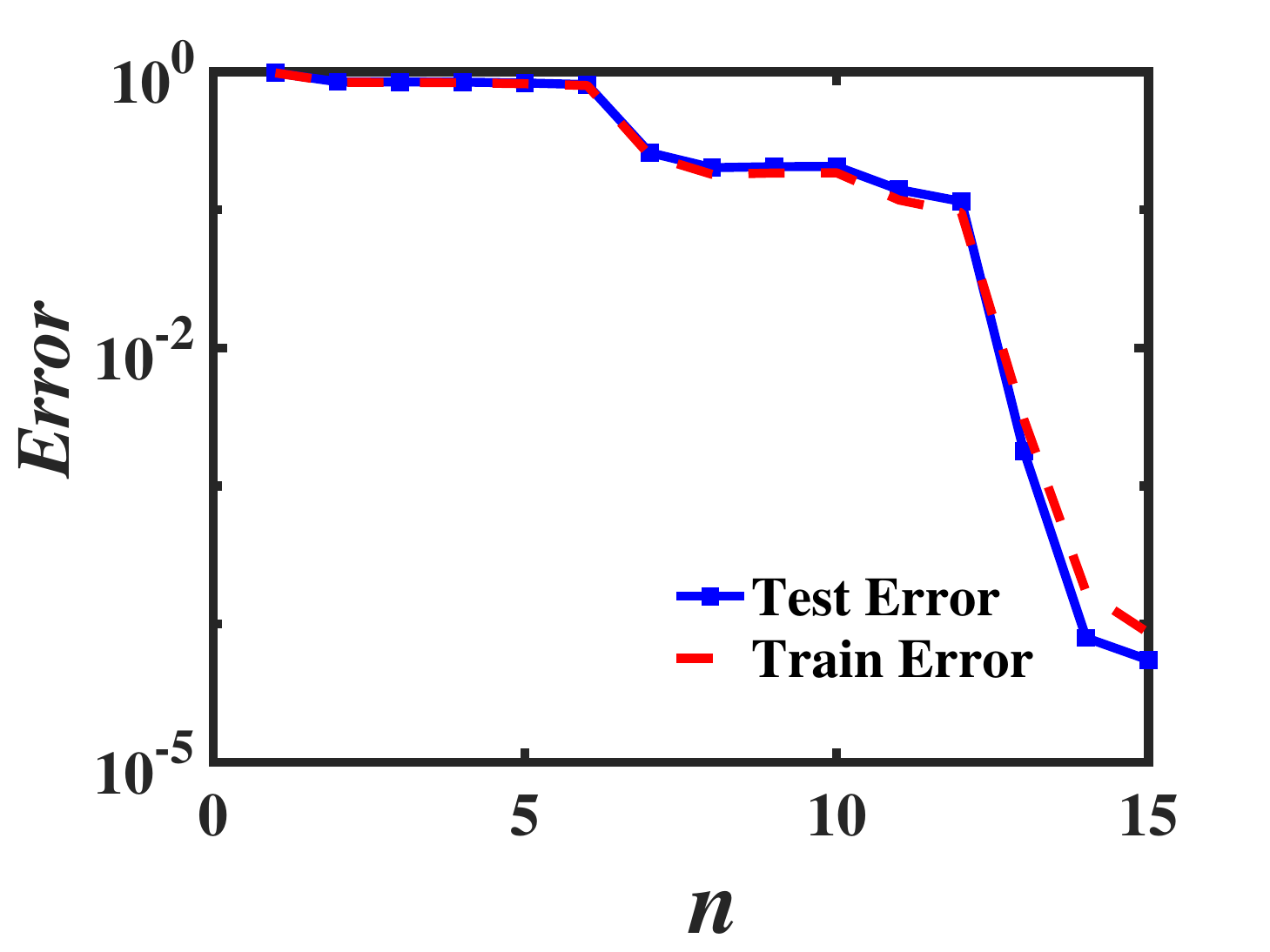}
\includegraphics[width=0.49\textwidth]{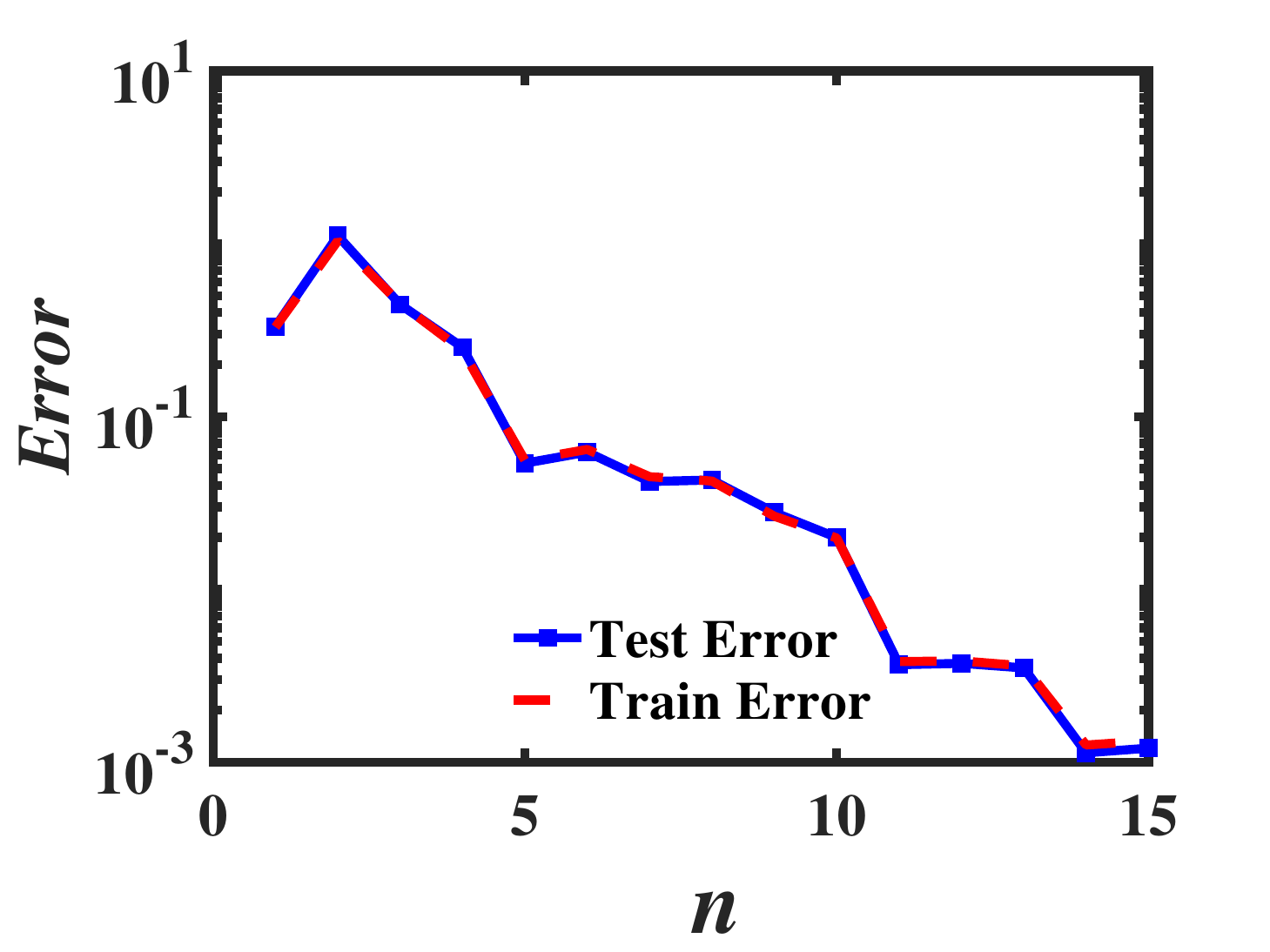}\\
\includegraphics[width=0.32\textwidth]{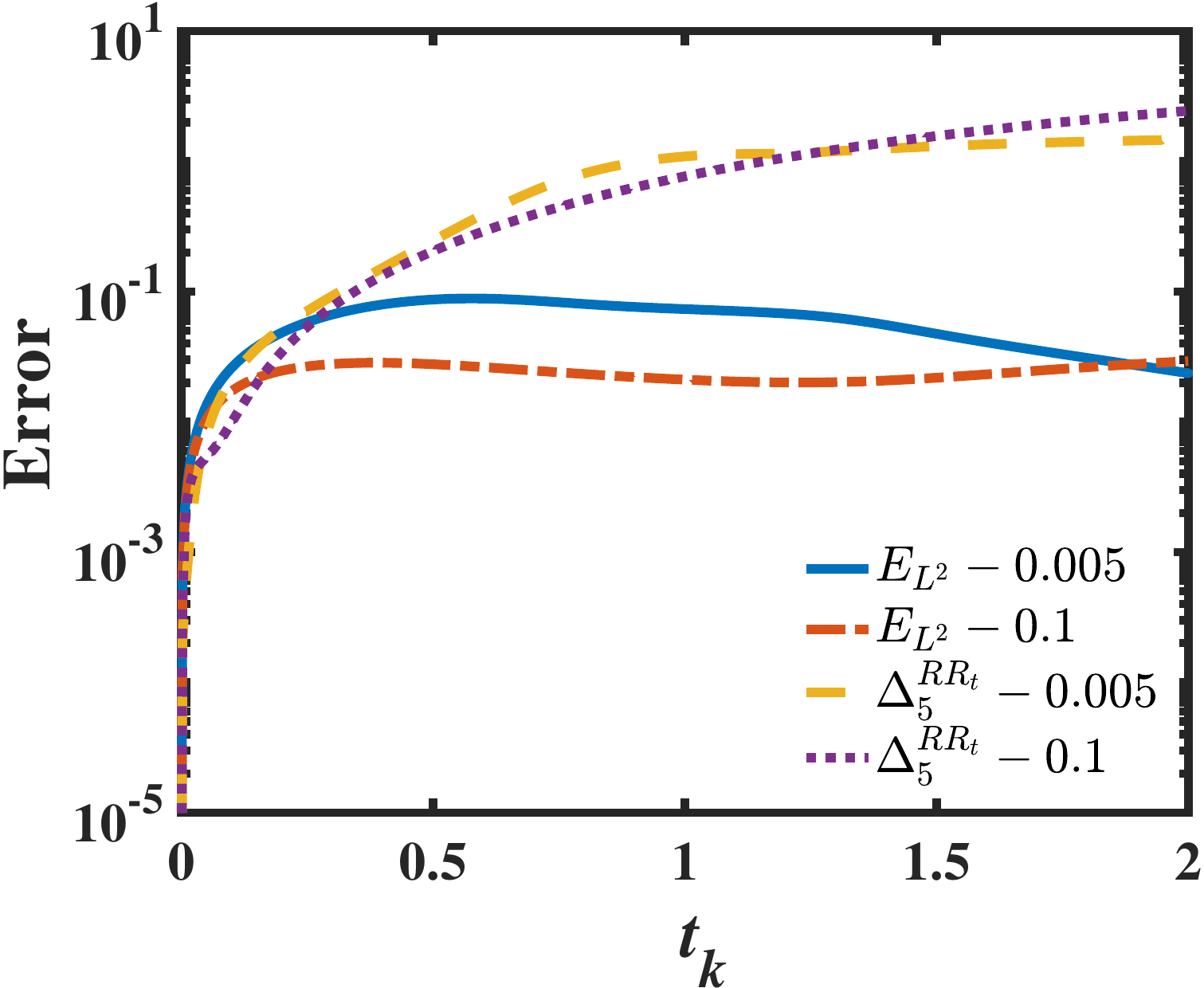}
\includegraphics[width=0.32\textwidth]{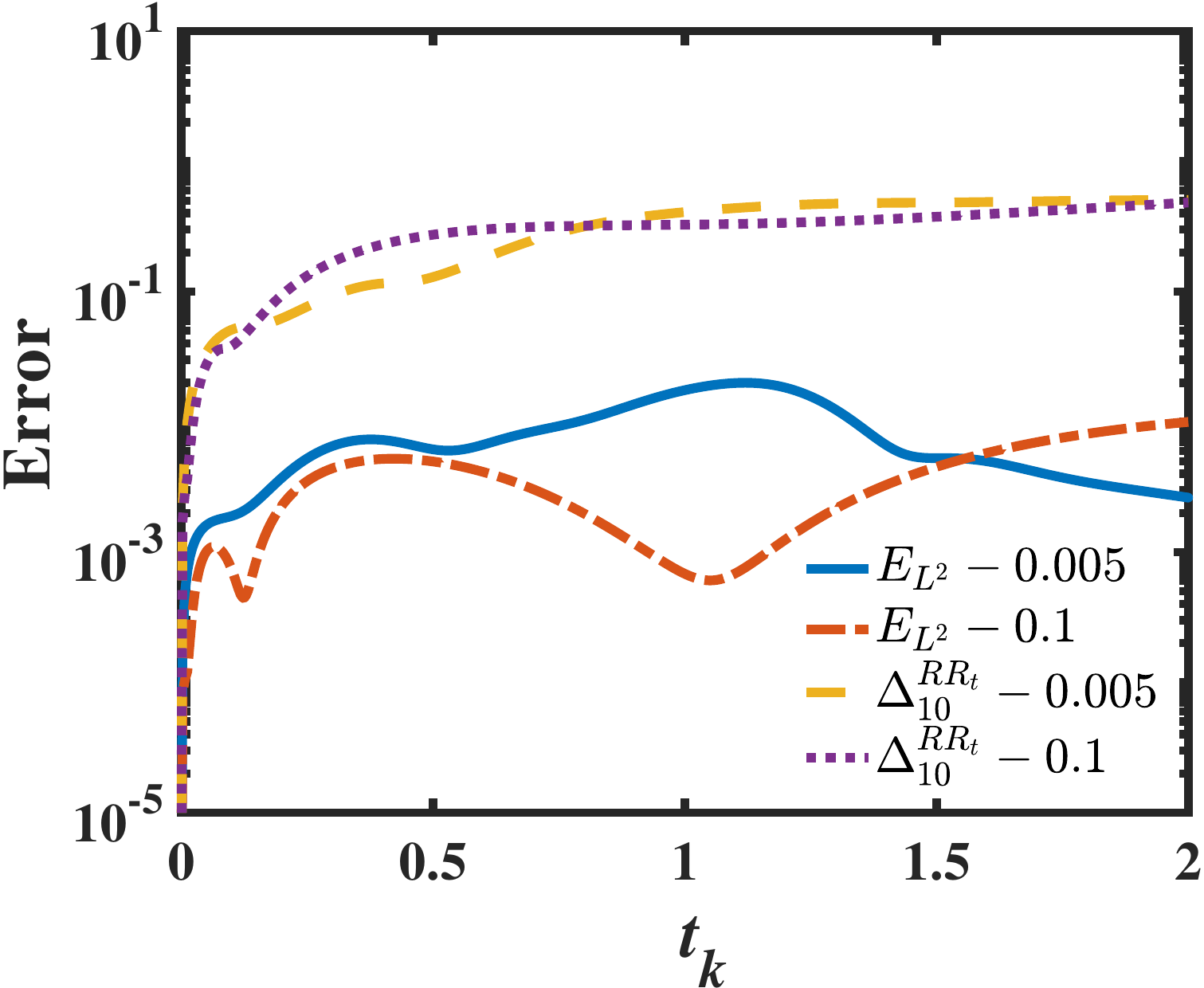}
\includegraphics[width=0.32\textwidth]{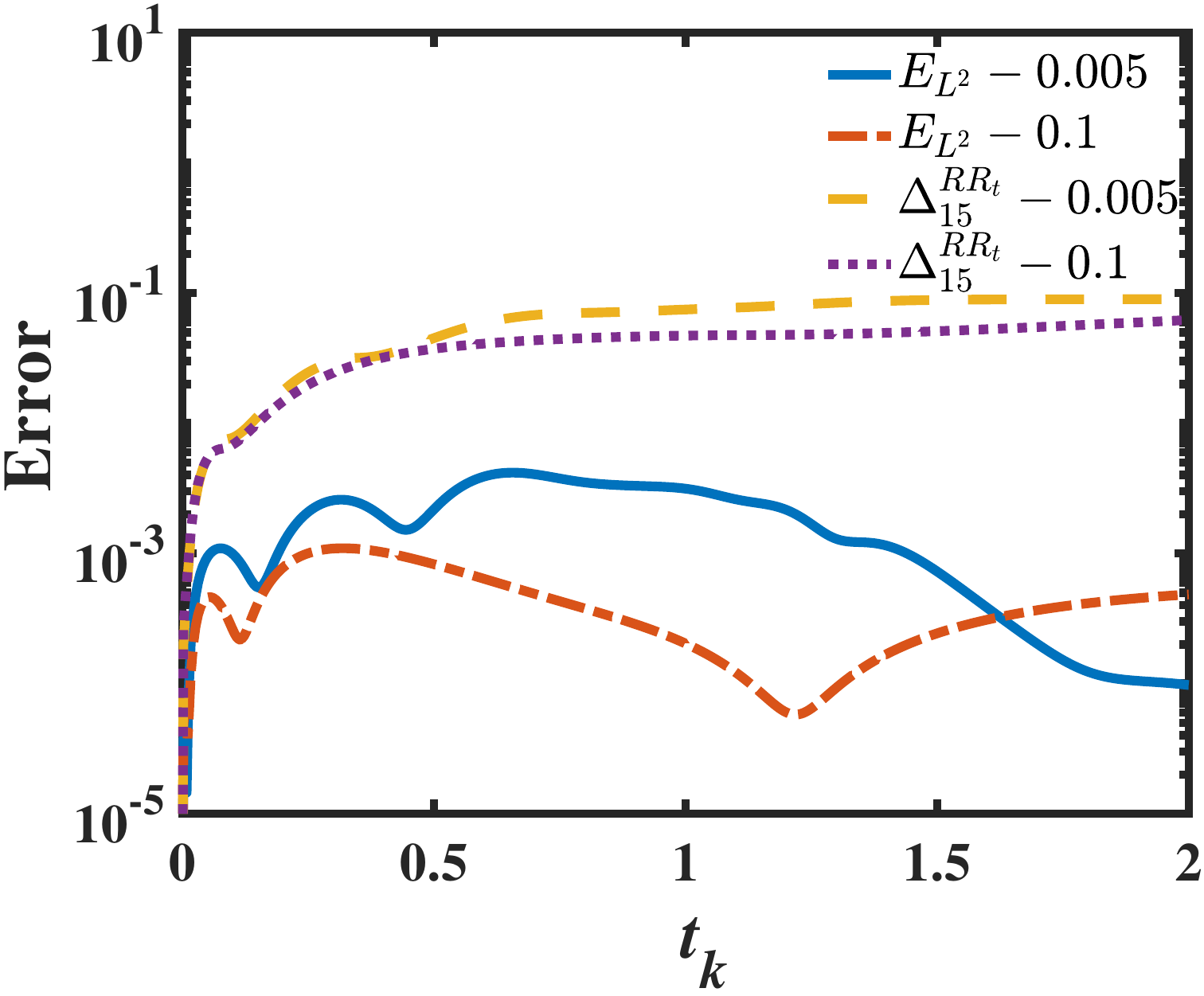}
\caption{
Transient viscous Burgers' result. On the top row are the error curves of R2-ROC with $N=15$ basis elements for the setup in \cite{peherstorfer2019sampling} (left) and \cite{nguyen2009reduced} (right). Plotted at the bottom row are the actual $L^2$ error, $||u^\N(:,t_k;\bmu) -u_N(:,t_k;\bmu)||$ and error estimator $\Delta_n^{RRt}(t_k;\bmu)$ as a function of discrete time $t_k$. The left, center and right plots show $N=5, 10, 15$, respectively, with parameter values bring $\bmu = 0.005,0.01, 0.1$ and the setup as in \cite{nguyen2009reduced}. }
\label{fig:ben}
\end{figure}

\subsubsection{Nonlinear reaction diffusion problems}

Next, we consider accordingly the following time dependent nonlinear reaction diffusion equation,
\begin{equation}
\begin{split}
u_t -\mu_2 \Delta u +u{(u-\mu_1)}^2 & = f(\bx), \mbox{ in } \Omega=[-1,1]\times [-1,1],\\
u & = 0 \mbox{ on } \partial \Omega, \\
u(\bx, t = 0) & = u_0(\bx).
\end{split}
\end{equation}
Here $f(\bx)=100\sin(2\pi x_1)\cos(2\pi x_2)$, and $[\mu_1,\mu_2] \in \calD := [1,5]\times [0.2,1]$. 
The parameter space $\calD$ is discretized by a $128 \times 32$ uniform tensorial grid. Denoting the step size along the $\mu_1$ direction by $h_1$, and the other by $h_2$, we specify the training and test sets as follows, 
\begin{align*}
  \Xi_{\rm train} &=  (1:8h_1:5) \times (0.2:2h_2:1), \\
  \Xi_{\rm test} &=   ((1+2h_1):4h_1:(5-2h_1)) \times ((0.2+h_2):4h_2:(1-h_2)).
\end{align*}
For the truth approximation, we use backward Euler for time marching and the same nonlinear spatial solver as the steady-state case \eqref{Operator2}.

Exponential convergence is evidenced in Figure \ref{fig:timecubic} top left. We also report the $\bmu$-component of  the parameter values selected by R2-ROC in  the top middle. Note that the RB space is built from the snapshots
\[
\left\{u(t^1_{\bmu^n}, \cdot; \bmu^n), \dots, u(t_{\bmu^n}^{k_{\bmu^n}}, \cdot; \bmu^n)\right\}_{n=1}^N.
\]
That is, for each distinct parameter value $\bmu^n$ chosen by R2-ROC, there are $k_{\bmu^n} \ge 1$ time level  snapshots $\{t_{\bmu^n}^1, \dots, t_{\bmu^n}^{k_{\bmu^n}}\} \subset \{t_0, t_1, \dots, t_{\calN_t}\}$. The red number by each $\bmu$ values in the middle pane denotes this $k_{\bmu^n}$. It is interesting to note that, consistent with the tendency of RBM selecting boundary values of the parameter domain, our R2-ROC tends to select multiple snapshots along time for the selected parameters when they are at the boundary of the parameter domain. 

To show the vast saving of the offline time for the R2-ROC approach, we present the comparison in cumulative computation time for the L1-ROC, R2-ROC, and the {high fidelity truth approximations} in Figure \ref{fig:timecubic} top right. 
We observe that the ``break-even'' number of runs for R2-ROC is smaller than that of the L1-ROC which is much smaller than that of the full simulation \cite{ChenJiNarayanXu2020}. The fact that they are even less than the dimension of the RB space underscores their efficiency. 
The bottom row of Figure \ref{fig:timecubic} shows the collocation points in the physical domain.

\begin{figure}[!htb]
\centering
\includegraphics[height = 0.20\textheight]{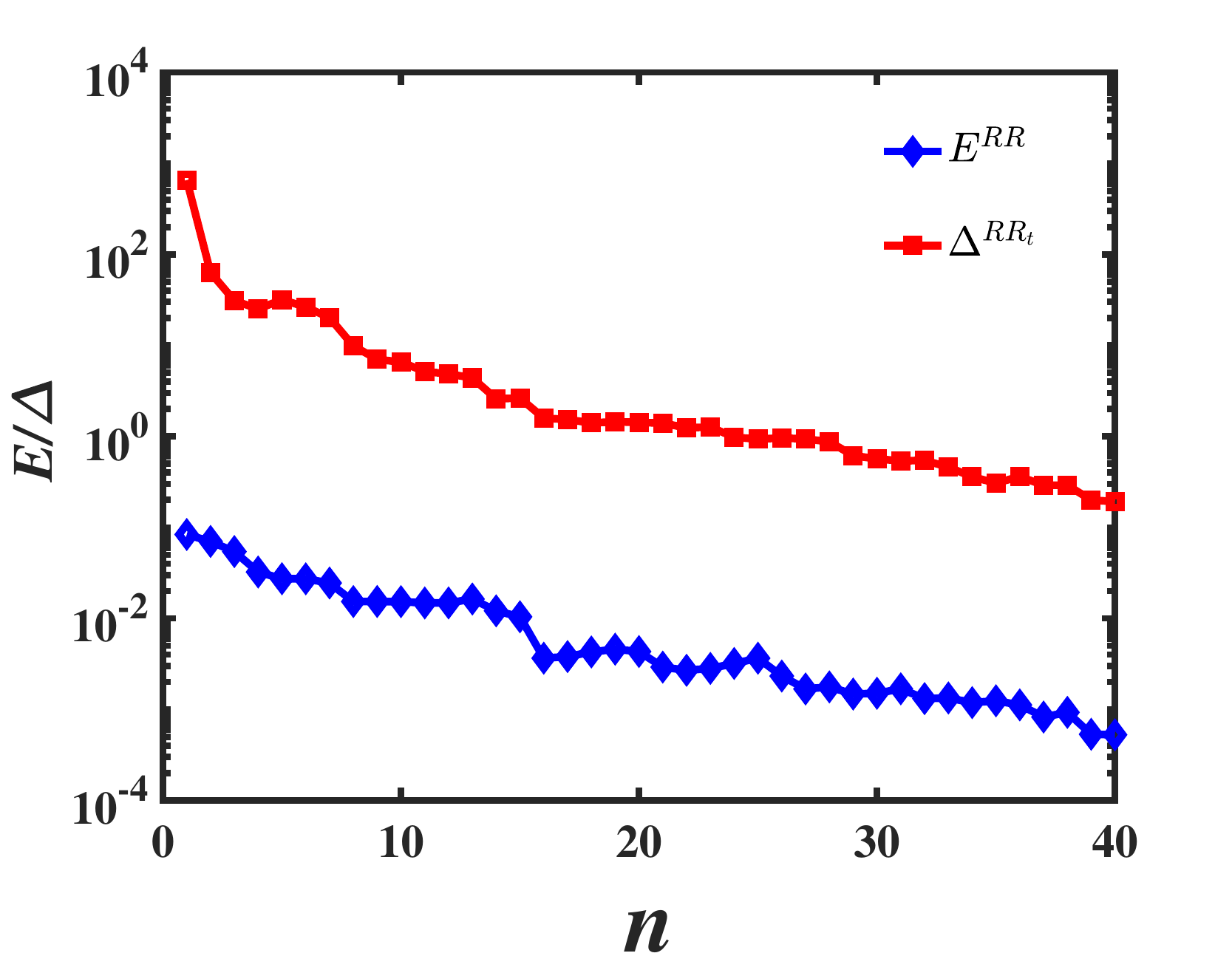}
\includegraphics[height = 0.20\textheight]{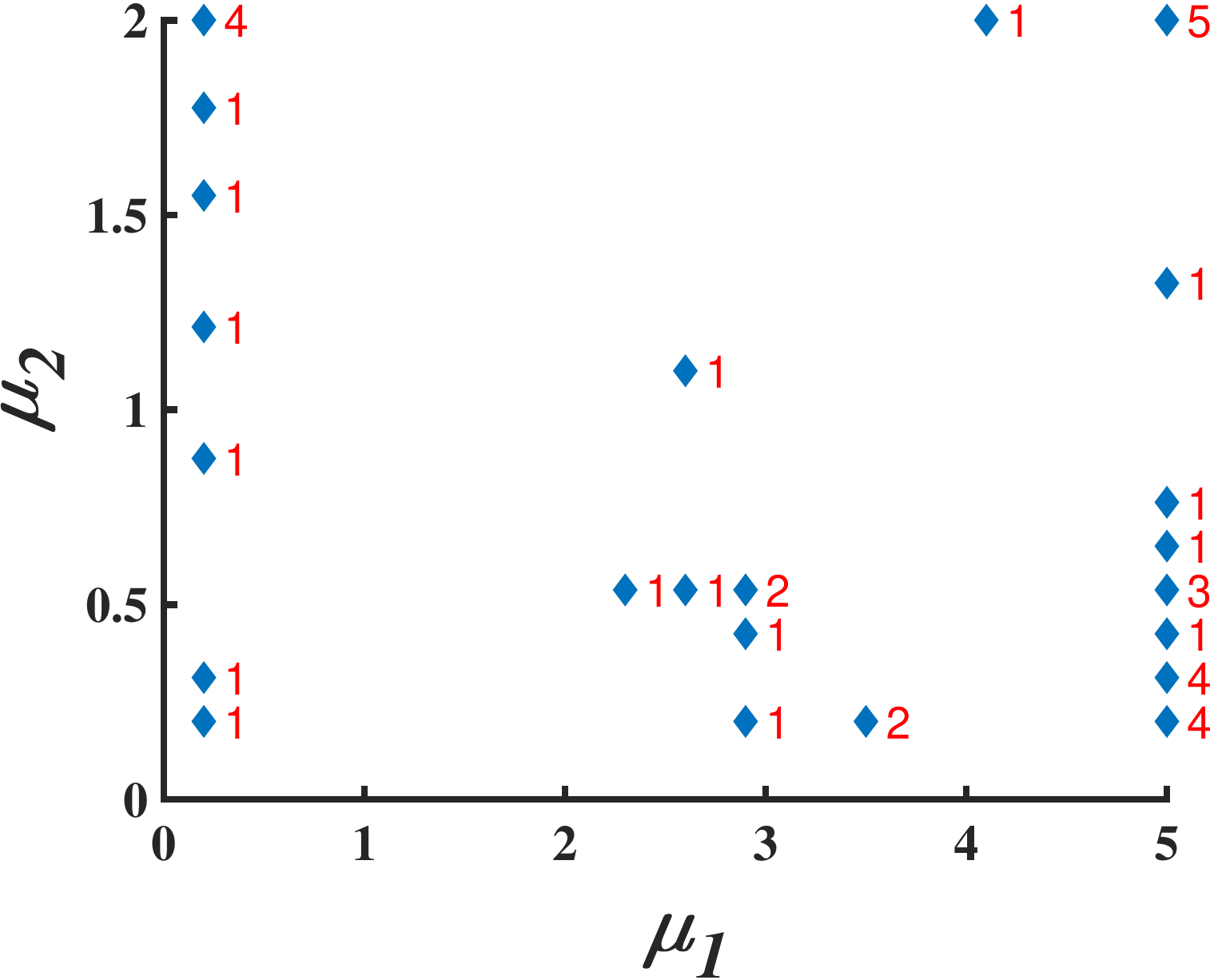}
\includegraphics[height = 0.20\textheight]{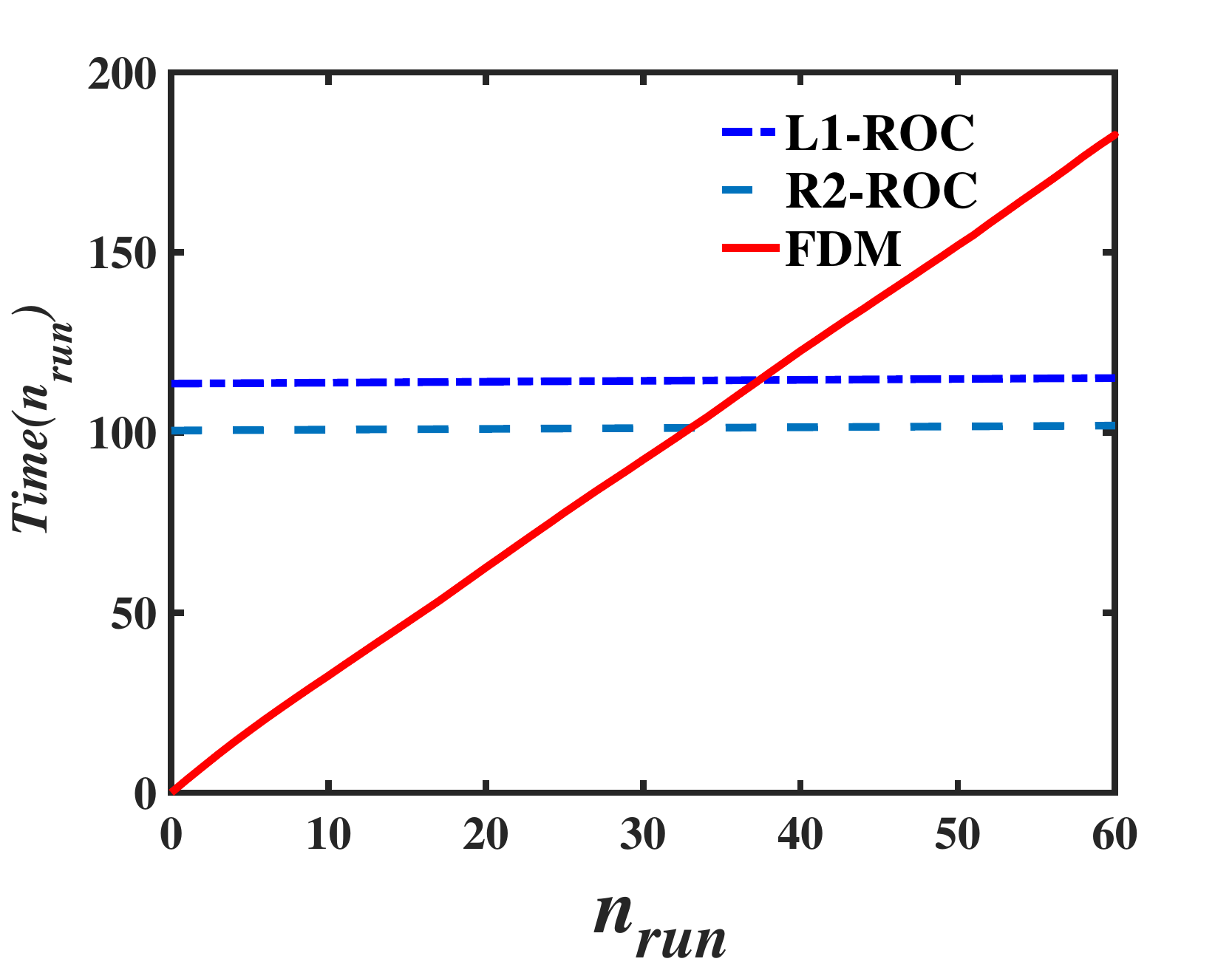}\\
\includegraphics[width=0.49\textwidth]{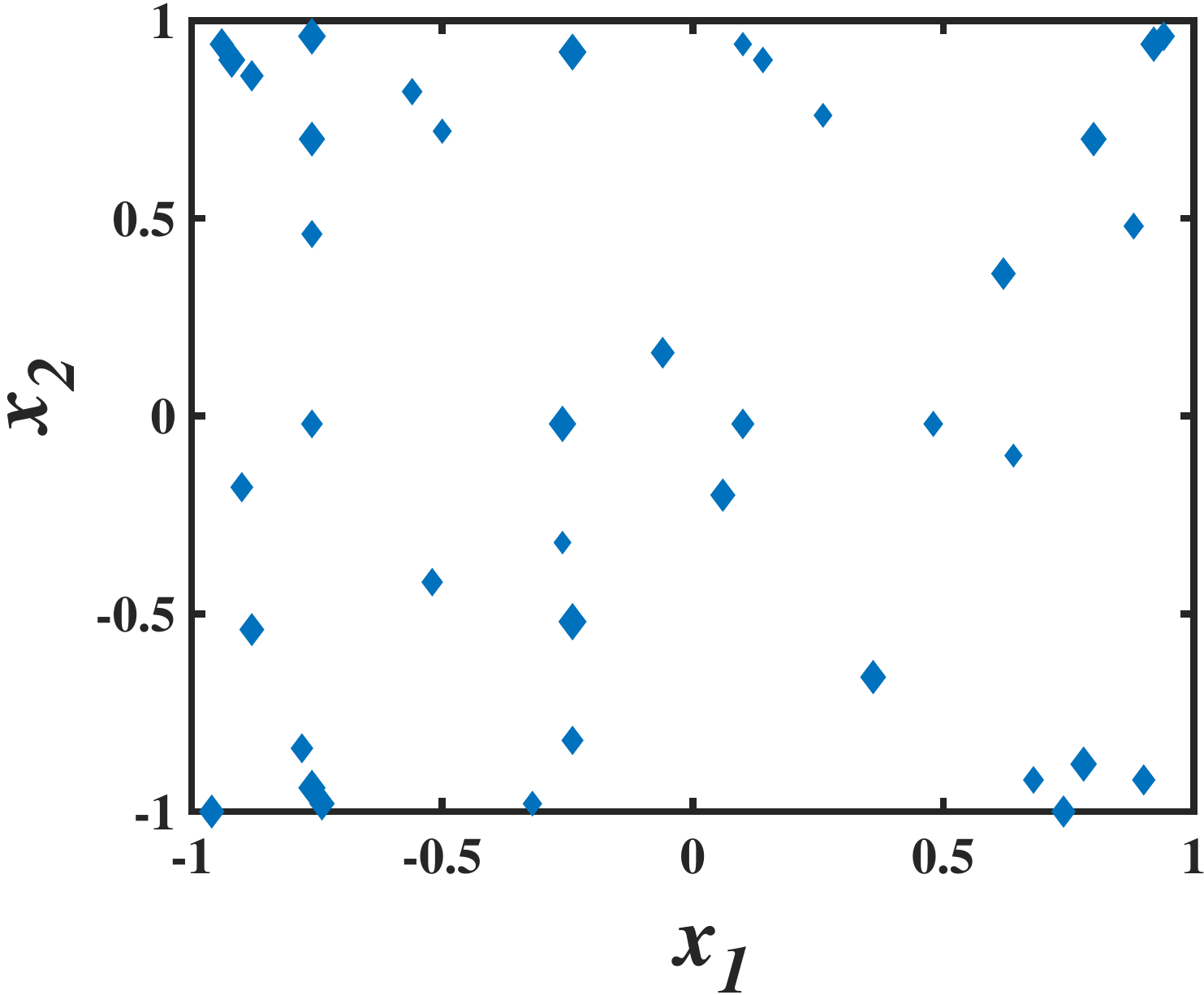}
\includegraphics[width=0.49\textwidth]{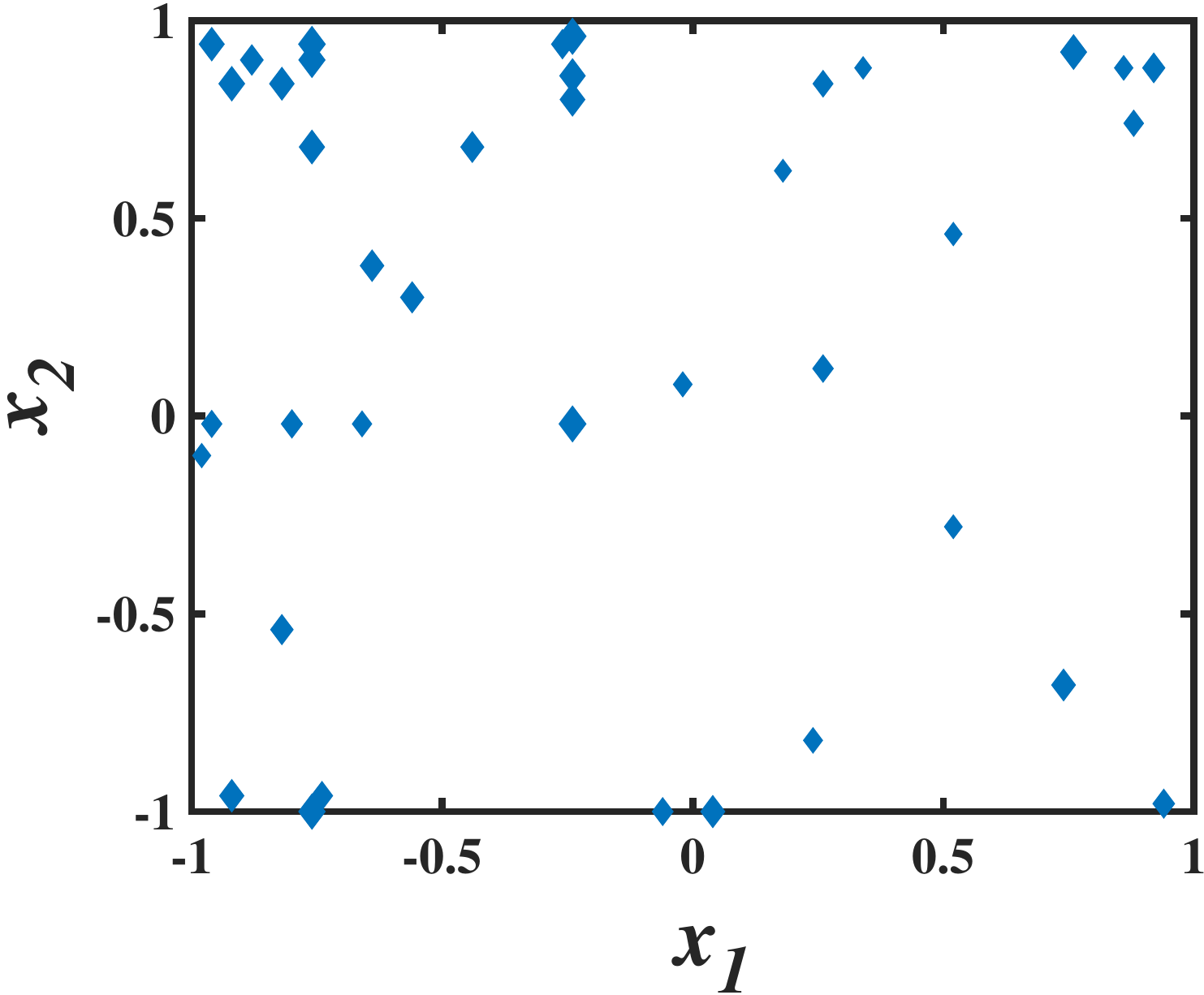}
\caption{Transient cubic reaction diffusion result. Top Left: Error curves of R2-ROC algorithm. Top Middle: Selected parameters when $N_{max}=40$. The number means corresponding parameter is selected at many different time nodes. Top Right: Cumulative run time comparison. Collocation points from solutions and residuals are shown at the bottom row from left to right respectively.  }
\label{fig:timecubic}
\end{figure}

\section{Conclusion}
\label{sec:conclusion}
This paper proposes a novel reduced over-collocation method, dubbed R2-ROC, for efficiently solving parametrized nonlinear and nonaffine PDEs. By integrating EIM/GEIM techniques on the solution snapshots and well-chosen residuals, the collocation philosophy, and the simplicity of evaluating the hyper-reduced well-chosen residuals, R2-ROC has online computational complexity independent of the degrees of freedom of the underlying FDM, and more interestingly, the number of EIM/GEIM expansion terms. 
This expansion would have otherwise significantly degraded the efficiency of a traditional RBM when applied to the nonaffine and nonliner terms in the equation. The lack of such precomputations of nonlinear and nonaffine terms makes the method dramatically faster offline and online, and significantly simpler to implement than any existing RBM. For future directions, we plan to extend R2-ROC from scalar to systems of nonlinear equations with nonaffine parameter dependence.

\bibliographystyle{abbrv}
\bibliography{rbmbib}

\end{document}